\makeatletter \@namedef{subjclassname@2010}{
  \textup{2010} Mathematics Subject Classification}
\newtheorem{thm}{Theorem}[section]
\newtheorem{cor}[thm]{Corollary}
\newtheorem{lem}[thm]{Lemma}
\newtheorem{pro}[thm]{Proposition}
\newtheorem{conj}[thm]{Conjecture}
\theoremstyle{remark}
\newtheorem*{rema}{Remark}
\theoremstyle{definition}
\newcommand{\R}{\mathbb{R}}
\newcommand{\C}{\mathbb{C}}
\begin{document}

\title[Maximality of Operators]{Maximality of Linear Operators}
\author[M. Meziane and M. H. MORTAD]{Mohammed Meziane and Mohammed Hichem Mortad$^*$}

\dedicatory{}
\thanks{* Corresponding author.}
\date{}
\keywords{Normal, self-adjoint and symmetric Operators.
Commutativity. Maximality of operators}

\subjclass[2010]{Primary 47A05,
 Secondary 47A10, 47B20, 47B25}

 \address{ Department of
Mathematics, University of Oran 1, Ahmed Ben Bella, B.P. 1524, El
Menouar, Oran 31000, Algeria.\newline {\bf Mailing address}:
\newline Pr Mohammed Hichem Mortad \newline BP 7085 Seddikia Oran
\newline 31013 \newline Algeria}

\email{m.meziane13@yahoo.fr} \email{mhmortad@gmail.com,
mortad@univ-oran.dz.}

\begin{abstract}
We present maximality results in the setting of non necessarily
bounded operators. In particular, we discuss and establish results
showing when the "inclusion" between operators becomes a full
equality.
\end{abstract}

\maketitle

\section{Introduction}

In the theory of non necessarily bounded linear operators on a
complex Hilbert space $H$,
 we say that an operator $T$ with domain $D(T)\subset H$ is an extension of
$S$ with domain $D(S)\subset H$ when: $D(S)\subset D(T)$ and $Sx=Tx$
for all $x\in D(S)$. We then write $S\subset T$. We say that $S$ is
closed if it possess a closed graph in $H\oplus H$.

The product of $S$ and $T$ is defined
\[(ST)x=S(Tx)\]
for each $x$ on the natural domain
\[D(ST)=\{x\in D(T):~Tx\in D(S)\}.\]

We say that $T$ is invertible if there exists an $S\in B(H)$ (we
then write $T^{-1}=S$) such that
\[ST\subset I\text{ and } TS=I\]
where $I$ is the identity operator on $H$. It is known that the
product $ST$ is closed if for instance $S$ is closed and $T\in
B(H)$, or if $S^{-1}\in B(H)$ and $T$ is closed.

We also recall that an operator $S$ is said to be densely defined if
its domain $D(S)$ is dense in $H$. It is known that in such case its
adjoint $S^*$ exists and is unique. Notice that if $S$, $T$ and $ST$
are all densely defined, then we are only sure of
\[T^*S^*\subset (ST)^*,\]
and a full equality occurring if e.g. $T^{-1}\in B(H)$ or $S\in
B(H)$.

A densely defined operator $S$ is said to be symmetric if $S\subset
S^*$. It is called self-adjoint if $S=S^*$. We say that $S$ is
normal if $S$ is densely defined,  closed and $SS^*=S^*S$. Recall
that the previous is equivalent to $\|Sx\|=\|S^*x\|$ for all $x\in
D(S)=D(S^*)$. We say that $S$ is formally normal if
$\|Sx\|=\|S^*x\|$ for all $x\in D(S)\subset D(S^*)$. Other classes
of operators are defined in the usual fashion. Let us also agree
that any operator is linear and non necessarily bounded unless we
specify that it belongs to $B(H)$. We also assume the basic theory
of operators (see e.g. \cite{Con} or \cite{WEI}). We do recall the
celebrated Fuglede-Putnam Theorem though:

\begin{thm}(for a proof, see e.g. \cite{Con})
Let $T\in B(H)$ and let $M,N$ be two normal non necessarily bounded
operators. Then
\[TN\subset MT\Longrightarrow TN^*\subset M^*T.\]
\end{thm}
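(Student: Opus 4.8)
The plan is to reduce the unbounded statement to the classical (bounded) Fuglede--Putnam theorem of Rosenblum, which I may take as part of the basic theory. The bridge to the bounded setting is provided by the resolvents of $M$ and $N$, which are bounded and normal. Fix a point $\lambda\in\rho(N)\cap\rho(M)$ and write $R=(N-\lambda)^{-1}$ and $S=(M-\lambda)^{-1}$; both lie in $B(H)$ and are normal, being bounded Borel functions of the normal operators $N$ and $M$.

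First I would convert the intertwining $TN\subset MT$ into a bounded relation. For $x\in D(N)$ the hypothesis gives $T(N-\lambda)x=(M-\lambda)Tx$, and here $Tx\in D(M)$. Writing an arbitrary $y\in H$ as $y=(N-\lambda)x$ with $x=Ry$ and applying $S$ yields
\[
ST=TR.
\]
Now the bounded Fuglede--Putnam theorem applies to the bounded normal operators $R,S$ and gives $TR^{*}=S^{*}T$. Since $R^{*}=(N^{*}-\overline{\lambda})^{-1}$ and $S^{*}=(M^{*}-\overline{\lambda})^{-1}$ (the adjoint of a resolvent being the resolvent of the adjoint), this reads
\[
T(N^{*}-\overline{\lambda})^{-1}=(M^{*}-\overline{\lambda})^{-1}T.
\]

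Finally I would translate this bounded identity back into the desired inclusion. Given $x\in D(N^{*})$, set $w=(N^{*}-\overline{\lambda})x$ so that $x=(N^{*}-\overline{\lambda})^{-1}w$; the displayed identity shows $Tx=(M^{*}-\overline{\lambda})^{-1}Tw$, whence $Tx\in D(M^{*})$ and, after applying $(M^{*}-\overline{\lambda})$ and cancelling the common term $\overline{\lambda}Tx$, one obtains $M^{*}Tx=TN^{*}x$. This is precisely $TN^{*}\subset M^{*}T$.

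I expect the main obstacle to lie not in these translations --- which are routine once one is careful that $T\in B(H)$ keeps every inclusion an inclusion and every cancellation legitimate --- but in securing a single point of $\rho(N)\cap\rho(M)$: for a normal operator the resolvent set may be empty (for instance, multiplication by $z$ on a Gaussian-weighted $L^{2}(\C)$ has spectrum all of $\C$), so the resolvent reduction is not unconditional. To cover that case one must instead run a Rosenblum-type argument at the spectral level: form the unitary groups $U_{N}(w)=\exp(wN^{*}-\overline{w}N)$ and $U_{M}(w)=\exp(wM^{*}-\overline{w}M)$ from the spectral measures, establish $TU_{N}(w)=U_{M}(w)T$ for every $w\in\C$ by a Liouville/boundedness argument, and differentiate at $w=0$ to recover $TN^{*}\subset M^{*}T$. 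Producing that unitary intertwining, together with the attendant domain and differentiability justifications, is the genuinely delicate step.
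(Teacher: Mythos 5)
The paper gives no proof of this theorem---it simply points to Conway---so your argument has to stand on its own, and it does not quite do so. The resolvent reduction is correct as far as it goes: if some $\lambda\in\rho(N)\cap\rho(M)$ exists, then $ST=TR$ does follow from $TN\subset MT$ exactly as you describe, the bounded Fuglede--Putnam theorem applies to the bounded normal resolvents, and the translation back to $TN^*\subset M^*T$ is routine (all the domain and surjectivity checks you need are available because $\lambda\in\rho(N)$ and $\overline{\lambda}\in\rho(N^*)$). But, as you yourself point out, a normal operator can have empty resolvent set---multiplication by $z$ on $L^2$ of a measure supported on all of $\C$---so $\rho(N)\cap\rho(M)\neq\emptyset$ is a genuine extra hypothesis, not a technicality. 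Your main line of argument therefore proves only a special case of the stated theorem.

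The fallback you sketch does not close the gap. To run a Rosenblum-type argument you would first need $TU_N(w)=U_M(w)T$ for the unitaries $U_N(w)=\exp(wN^*-\overline{w}N)$, and deducing that intertwining from $TN\subset MT$ is essentially the content of the theorem itself: one cannot expand the exponential in a power series (the domains $D(N^k)$ shrink), one cannot differentiate a group one has not yet shown to intertwine, and Rosenblum's entire-function trick is unavailable because $e^{\overline{w}N^*}$ is not a bounded operator when $N$ is unbounded. A standard route that works in full generality (and is the kind of argument the citation to Conway points to) replaces resolvents by spectral projections: partition $\C$ into bounded Borel sets $\Delta_n$, so that the spectral projections $E(\Delta_n)$ of $N$ and $F(\Delta_m)$ of $M$ decompose $H$ into reducing subspaces on which $N$ and $M$ restrict to \emph{bounded} normal operators; check that each block $F(\Delta_m)TE(\Delta_n)$ intertwines these bounded restrictions; apply the bounded Fuglede--Putnam theorem blockwise; and reassemble, with some care about domains and convergence of the resulting sums. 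If you substitute that decomposition for your unitary-group sketch (or explicitly add the hypothesis that the resolvent sets meet), the proof is complete.
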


One of the main aims of this work is to seek conditions which
transform $S\subset T$ into $S=T$ (which we call a maximality
condition) for some classes of operators, and also in the case of a
product of two operators. This type of results is a powerful tool
when proving results on unbounded operators. For instance, Statement
(3) of the next theorem is used in the proof of the "unbounded"
version of the spectral theorem of normal operators (see e.g.
\cite{RUD}). For other uses, see e.g.
\cite{Gustafson-Mortad-II-2016-JOT} or \cite{Mortad-2015-RCSP}.

Let us now list some known (see e.g. \cite{RUD} or
\cite{SCHMUDG-book-2012}) maximality results:

\begin{thm}\label{thm devinat mortad one condition} Let $S,T$ be two operators with (dense when necessary) domains $D(S)$ and
$D(T)$ respectively such that $S\subset T$. Then $S=T$ when one of
the following occurs:
\begin{enumerate}
  \item $S$ is surjective and $T$ is injective.
  \item $T$ is symmetric and $S$ is self-adjoint (resp. normal). We then say that
  self-adjoint (resp. normal) operators are maximally symmetric.
  \item $T$ and $S$ are normal (we say that
  normal operators are maximally normal). Hence, self-adjoint (resp.
  normal) operators are maximally normal (resp. self-adjoint).
  \item $S$ is normal and $T$ is formally normal.
\end{enumerate}
\end{thm}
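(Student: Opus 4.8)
The unifying observation is that, since we are handed $S\subset T$ (so $D(S)\subset D(T)$ and the two operators agree on $D(S)$), proving $S=T$ reduces in every case to establishing the reverse domain inclusion $D(T)\subset D(S)$. I would attack all four cases with this single target, and the two recurring tools will be the contravariance of adjoints under extension, $S\subset T\Longrightarrow T^*\subset S^*$ (legitimate once both operators are densely defined), and the domain equality $D(S)=D(S^*)$ that is built into the definition of a normal operator. A preliminary remark would dispatch the density bookkeeping: in cases (2)--(4) the operator $S$ is normal or self-adjoint, hence densely defined, so $D(S)\subset D(T)$ forces $T$ to be densely defined as well and all the adjoints below make sense; case (1) needs no density at all.

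For case (1), take $z\in D(T)$. Surjectivity of $S$ yields $x\in D(S)$ with $Sx=Tz$; since $S\subset T$ this reads $Tx=Tz$, and injectivity of $T$ gives $x=z$, so $z\in D(S)$. For case (2), the extension $S\subset T$ gives $T^*\subset S^*$, while symmetry of $T$ gives $T\subset T^*$, producing the chain $S\subset T\subset T^*\subset S^*$. If $S$ is self-adjoint, then $S^*=S$ collapses the chain to $S\subset T\subset S$ and equality follows. If $S$ is only assumed normal, the same chain still shows $S\subset S^*$; combined with $D(S)=D(S^*)$ this upgrades to $S=S^*$, i.e. $S$ is in fact self-adjoint, and the previous sentence applies.

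Cases (3) and (4) I would treat together, noting that a normal operator is in particular formally normal, so (3) is the special instance of (4) in which $T$ happens to be normal. Assuming $S$ normal and $T$ formally normal, I would again invoke $T^*\subset S^*$, hence $D(T^*)\subset D(S^*)$. Formal normality of $T$ means exactly $D(T)\subset D(T^*)$, and normality of $S$ gives $D(S^*)=D(S)$; chaining these yields $D(T)\subset D(T^*)\subset D(S^*)=D(S)$, which is precisely the reverse inclusion sought. The ``Hence'' assertions then follow because every self-adjoint operator is normal: a self-adjoint $S$ inside a normal $T$ falls under (3), and a normal $S$ inside a self-adjoint (hence normal) $T$ does as well.

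The arguments are short once the right characterizations are marshalled, so there is no single deep obstacle; the step demanding the most care is the normal sub-case of (2), where one must recognize that the induced symmetry $S\subset S^*$ is promoted to full self-adjointness precisely because a normal operator carries the domain equality $D(S)=D(S^*)$. The only other point to keep honest is verifying, in each case, that the operators are densely defined so that the adjoints exist and the contravariance $S\subset T\Rightarrow T^*\subset S^*$ may be applied.
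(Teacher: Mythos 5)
Your proof is correct and takes essentially the same route as the paper: your unifying mechanism (adjoint contravariance $T^*\subset S^*$ combined with $D(T)\subset D(T^*)$ and $D(S^*)\subset D(S)$, yielding $D(T)\subset D(T^*)\subset D(S^*)\subset D(S)$) is precisely the Proposition the paper states immediately after the theorem as the common source of statements (2)--(4), and your direct surjectivity/injectivity argument for (1) is the standard one. All steps, including the density bookkeeping and the promotion of the normal $S$ in case (2) to a self-adjoint operator via $D(S)=D(S^*)$, are sound.
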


In fact, Statements (2) to (4) of the preceding result are all
simple consequences of the following readily verified result:

\begin{pro}Let $S$ and $T$ be two operators of domains $D(S)$ and
$D(T)$ respectively. If $S$ is densely defined and $D(S^*)\subset
D(S)$, then
\[S\subset T\Longrightarrow S=T\]
whenever $D(T)\subset D(T^*)$.
\end{pro}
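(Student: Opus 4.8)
The plan is to reduce everything to a chain of domain inclusions. Since the hypothesis $S\subset T$ already gives $D(S)\subset D(T)$ together with the agreement $Sx=Tx$ on $D(S)$, the only thing left to establish for $S=T$ is the reverse inclusion $D(T)\subset D(S)$; once the two domains are shown to coincide, $S=T$ follows immediately.

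First I would record that $S$ being densely defined, combined with $D(S)\subset D(T)$, forces $D(T)$ to be dense as well, so that the adjoint $T^*$ is well defined. The one substantive step is then the standard fact that passing to adjoints reverses inclusions: from $S\subset T$, with both operators densely defined, one obtains $T^*\subset S^*$, and in particular $D(T^*)\subset D(S^*)$. (This is the usual verification: any $y\in D(T^*)$ satisfies $\langle Tx,y\rangle=\langle x,T^*y\rangle$ for all $x\in D(T)$, and restricting to $x\in D(S)\subset D(T)$, where $Tx=Sx$, shows $y\in D(S^*)$ with $S^*y=T^*y$.)

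With this in hand the rest is a short diagram chase on domains. Combining the hypothesis $D(T)\subset D(T^*)$, the inclusion $D(T^*)\subset D(S^*)$ just obtained, and the hypothesis $D(S^*)\subset D(S)$, I would write
\[ D(T)\subset D(T^*)\subset D(S^*)\subset D(S), \]
so that $D(T)\subset D(S)$. Together with the inclusion $D(S)\subset D(T)$ coming from $S\subset T$, this yields $D(S)=D(T)$, and since the operators already coincide on this common domain, we conclude $S=T$.

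The main (indeed essentially the only) point requiring care is the middle link of the chain: one must first secure density of $D(T)$ so that $T^*$ is meaningful, and then invoke the inclusion-reversing property of the adjoint. Everything else is a purely formal concatenation of set inclusions, which is why the statement can fairly be described as readily verified.
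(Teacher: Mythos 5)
Your proof is correct, and since the paper omits the argument entirely (calling the proposition ``readily verified''), your write-up is precisely the intended reasoning: use $S\subset T$ to get $T^*\subset S^*$ and then chain $D(T)\subset D(T^*)\subset D(S^*)\subset D(S)\subset D(T)$. The one point you rightly flag --- that $D(T)$ must be dense for $T^*$ to exist, which follows from $D(S)\subset D(T)$ --- is handled correctly, so there is nothing to add.
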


Let us now say a few words about "double maximality". A known
property (Theorem 5.31, \cite{WEI}) states that if $S$ is a
symmetric operator such that $S\subset R$ and $S\subset T$ where
$R,T$ are self-adjoint and $D(R)\subset D(T)$, then $T=R$. Observe
that the assumption $S$ symmetric is tacitly assumed in $S\subset R$
so there was no need to assume it. What is more, is that the
assumption $S$ being symmetric is not used in the proof of the
previous result. So, we restate this result as (cf. Proposition
\ref{maximality S T T* propo S.A.}):

\begin{pro}Let $S$ be a densely defined operator such that $S\subset R$ and $S\subset T$
where $R,T$ are both self-adjoint. If $D(R)\subset D(T)$, then
$T=R$.
\end{pro}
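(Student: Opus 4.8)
The plan is to reduce the two-sided hypothesis to a single inclusion $R\subset T$ between the two self-adjoint operators, after which the already-recorded maximality of self-adjoint operators finishes the job. Since $S$ is densely defined, its adjoint $S^*$ exists, and this is the only place where I would use the density assumption. Applying the order-reversing property of the adjoint to $S\subset R$ and to $S\subset T$ yields $R^*\subset S^*$ and $T^*\subset S^*$; here I need $S$ densely defined so that $S^*$ is well defined, while $R$ and $T$ are automatically densely defined because they are self-adjoint.

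Next I would invoke self-adjointness to rewrite these as $R\subset S^*$ and $T\subset S^*$, since $R=R^*$ and $T=T^*$. At this stage both $R$ and $T$ are restrictions of the single operator $S^*$, but these two facts alone do not compare $R$ and $T$ with one another. This is exactly where the hypothesis $D(R)\subset D(T)$ enters: given $x\in D(R)$, we have $x\in D(T)\subset D(S^*)$, together with $Rx=S^*x$ and $Tx=S^*x$, whence $Rx=Tx$. Hence $R\subset T$.

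Finally, $R\subset T$ with $R$ self-adjoint and $T$ self-adjoint (in particular symmetric) lets me conclude $R=T$ by Statement (2) of Theorem \ref{thm devinat mortad one condition}; equivalently, one may apply the preceding Proposition with $R$ in the role of $S$, using $D(R^*)=D(R)$ and $D(T)=D(T^*)$. The argument is short, so there is no serious computational obstacle; the one point that must be handled with care is the \emph{direction} of the inclusions. Adjunction turns the two ``inner'' inclusions $S\subset R$ and $S\subset T$ into two ``outer'' ones, and it is only after combining them with the prescribed containment $D(R)\subset D(T)$ that the comparison between $R$ and $T$ acquires a definite direction. Were the domains incomparable, the two operators could a priori fail to be nested even though both extend $S$ and are both dominated by $S^*$, which is why the assumption $D(R)\subset D(T)$ cannot be dispensed with.
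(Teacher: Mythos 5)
Your proof is correct and follows essentially the same route the paper has in mind (the Weidmann Theorem 5.31 argument it cites, cf.\ its Proposition \ref{maximality S T T* propo S.A.}): both $R$ and $T$ are restrictions of $S^*$ since $R=R^*\subset S^*$ and $T=T^*\subset S^*$, the domain hypothesis then gives $R\subset T$, and maximal symmetry of self-adjoint operators (Theorem \ref{thm devinat mortad one condition}(2)) yields $R=T$. Your phrasing via the order-reversing property of the adjoint is just the operator-level form of the inner-product computation the paper writes out in the analogous proof, so there is no substantive difference.
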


Closely related to what has just been said, we have:

\begin{pro}\label{maximality double S,T,R}(see
\cite{Mortad-2015-RCSP}, cf.
\cite{Stochel-Szafraniec-2003-JMSJ-domination-commutativity}) Let
$R,S,T$ be three densely defined operators on a Hilbert space $H$
with respective domains $D(R)$, $D(S)$ and $D(T)$. Assume that
\[\left\{\begin{array}{c}
                 T\subset R, \\
                 T\subset S.
               \end{array}
\right.\] Assume further that $R$ and $S$ are self-adjoint. Let
$D\subset D(T)$ ($\subset D(R)\cap D(S)$) be dense. Let $D$ be a
core, for instance, for $S$. Then $R=S$.

\end{pro}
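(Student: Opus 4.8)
The plan is to use the core hypothesis to promote the mere pointwise agreement of $R$ and $S$ on $D$ into a genuine operator inclusion $S\subset R$, and then to close the argument by invoking the maximality of self-adjoint operators already recorded in Theorem \ref{thm devinat mortad one condition}.

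First I would unwind the inclusions. Since $T\subset R$ and $T\subset S$, and since $D\subset D(T)$, for every $x\in D$ we have $Rx=Tx=Sx$; that is, the restrictions $R|_D$ and $S|_D$ coincide. In particular $S|_D=R|_D\subset R$, and likewise $S|_D\subset S$.

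The key step is to pass to closures. As $R$ is self-adjoint, it is closed, so its graph is a closed subspace of $H\oplus H$ containing the graph of $S|_D$; hence it contains the closure of the latter, which is by definition the graph of $\overline{S|_D}$. This gives $\overline{S|_D}\subset R$. Now the hypothesis that $D$ is a core for $S$ says precisely that $\overline{S|_D}=S$, and therefore $S\subset R$.

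Finally, both $S$ and $R$ are self-adjoint, so in particular $R$ is symmetric; the inclusion $S\subset R$ of a self-adjoint operator in a symmetric one forces equality by Statement (2) of Theorem \ref{thm devinat mortad one condition} (Statement (3) would serve equally well). Hence $R=S$. The one delicate point is the closure step $\overline{S|_D}\subset R$, since it is there, and only there, that the core assumption is genuinely used; and it is worth noting that, by the symmetry of the conclusion, one could just as well have assumed $D$ to be a core for $R$ rather than for $S$.
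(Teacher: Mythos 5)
Your proof is correct: the passage from agreement on $D$ to $\overline{S|_D}\subset R$ via closedness of the graph of $R$, then to $S\subset R$ via the core hypothesis, and finally to equality by maximal symmetry of self-adjoint operators (Theorem \ref{thm devinat mortad one condition}) is exactly the standard argument. The paper itself gives no proof of this proposition, deferring to \cite{Mortad-2015-RCSP}, and your argument matches the one given there in all essentials, including the observation that the core could equally be taken for $R$.
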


Finally, we recall results on the case when we have a product on one
side of the "inclusion":

\begin{thm}\label{maximality product mains RECALL THM} Let $R,S,T, A,B,C$ be operators such that $T\subset RS$ and $AB\subset C$.
Then:
\begin{enumerate}
  \item $T=RS$ if all $R,S,T$ are self-adjoint (see
  \cite{DevNussbaum-von-Neumann}).
  \item $T=\overline{RS}$ if $R, S, T$ are self-adjoint and $T_0\subset RS$ instead of $T\subset RS$ where $T_0$ is the restriction of $T$ to some domain $D_0(T)$ (see  \cite{Nussbaum-TAMS-commu-unbounded-normal-1969}).
  \item $AB=C$ when $A$ and $B$ are self-adjoint, $B$ is positive
  and $B^{-1}\in B(H)$ and $C$ is normal (\cite{mortad-commutatvity-devinatz-2013}).
  \item $C=BA$ whenever $A,B$ are self-adjoint and $B^{-1}\in
  B(H)$ and $C$ is closed and symmetric  (\cite{Nussbaum-TAMS-commu-unbounded-normal-1969}).
\end{enumerate}
\end{thm}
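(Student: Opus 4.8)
The plan is to treat the four statements by a single scheme: each asserts that a one-sided inclusion (either $T\subset RS$ or $AB\subset C$) is in fact an equality, and the mechanism for such an upgrade is already available in Theorem \ref{thm devinat mortad one condition}. In every case I would first take adjoints of the given inclusion, then use the hypotheses to compute the adjoint of the product \emph{exactly} (recall from the introduction that $(ST)^*=T^*S^*$ as soon as one factor is bounded or has bounded inverse), and finally show that the product operator belongs to the same maximal class as the operator it is compared with, so that one of the maximality statements of Theorem \ref{thm devinat mortad one condition} applies verbatim.

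Concretely, for (1) I would take adjoints in $T\subset RS$. Since $T=T^*$ this gives $(RS)^*\subset T\subset RS$, while the self-adjointness of $R$ and $S$ yields $SR=S^*R^*\subset (RS)^*$; hence $SR\subset T\subset RS$ and $(RS)^*\subset RS$. The whole problem then collapses to showing that $RS$ is self-adjoint, i.e. that the inclusion $(RS)^*\subset RS$ is an equality: once this is known, $T\subset RS$ is an inclusion between two self-adjoint operators and Theorem \ref{thm devinat mortad one condition}(3) forces $T=RS$. Statement (2) is the same argument carried out modulo closure: because $T_0$ is only a restriction of $T$, the inclusion $T_0\subset RS$ controls $RS$ only up to its closure, so I would run the identical adjoint computation with $\overline{RS}$ in place of $RS$ and use that $T_0$ is a core to recover all of $T$, obtaining $T=\overline{RS}$.

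For (3) and (4) the hypothesis $B^{-1}\in B(H)$ is decisive, since it makes the adjoint of the product honest: $(AB)^*=B^*A^*=BA$. In (3) I would further exploit positivity of $B$ to write $B=B^{1/2}B^{1/2}$ with $B^{1/2}$ bounded, self-adjoint and boundedly invertible, so that $AB=B^{-1/2}(B^{1/2}AB^{1/2})B^{1/2}$ exhibits $AB$ as a bounded-invertible conjugate of the self-adjoint operator $K:=B^{1/2}AB^{1/2}$; in particular $AB$ is closed and is self-adjoint with respect to the equivalent inner product induced by $B$. Feeding $AB\subset C$ with $C$ normal into Theorem \ref{thm devinat mortad one condition}(4) (normal operators are maximally formally normal) then yields $AB=C$, provided $AB$ is recognised as normal. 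In (4) I would take adjoints of $AB\subset C$ to get $C^*\subset (AB)^*=BA$; since $C$ is closed and symmetric, $C\subset C^*\subset BA$, while $AB\subset C$ symmetric gives $AB\subset (AB)^*=BA$, so that $\overline{AB}=(BA)^*\subset C\subset BA$. The conclusion $C=BA$ then follows once $BA$ is seen to be self-adjoint (equivalently $BA=\overline{AB}$), after which the sandwich $\overline{AB}\subset C\subset BA$ closes up.

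The routine part throughout is the bookkeeping with adjoints and inclusions; the genuine obstacle, common to (1), (3) and (4), is precisely the structural step of showing that the product ($RS$, resp. $AB$, $BA$) is self-adjoint (resp. normal) rather than merely one-sidedly comparable to its adjoint --- products of self-adjoint operators are in general not even symmetric, so this cannot be extracted by formal manipulation. In (1) this is the analytic heart of the Devinatz--Nussbaum--von Neumann theorem and I expect to need a spectral/resolvent argument showing that $RS\pm iI$ is bijective (using that $T\pm iI$ is surjective because $T$ is self-adjoint, together with injectivity of $(RS\pm iI)^*$ coming from $(RS)^*\subset T$); in (3) the corresponding point is the normality of $AB$, which must be read off from the conjugation by $B^{1/2}$ and the positivity of $B$. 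Once that single structural fact is in place in each item, the maximality results of Theorem \ref{thm devinat mortad one condition} do the rest.
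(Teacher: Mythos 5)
This theorem appears in the paper purely as a recollection of known results: no proof is given, each item being attributed to the literature (Devinatz--Nussbaum--von Neumann for (1), Nussbaum for (2) and (4), Mortad for (3)), so there is no in-paper argument to compare against. Judged on its own terms, your proposal is not a proof. In each item you carry out the formal bookkeeping correctly (taking adjoints, using $(AB)^*=B^*A^*$ when $B^{-1}\in B(H)$, invoking the maximality statements of Theorem \ref{thm devinat mortad one condition}), and then explicitly defer the one step that \emph{is} the content of the cited theorems: the self-adjointness of $RS$ in (1)--(2) and the normality or self-adjointness of $AB$, resp. $BA$, in (3)--(4). For (1) the sketched resolvent argument does not close: from $T=T^*$ and $SR=S^*R^*\subset(RS)^*$ you only obtain $(RS)^*\subset T\subset RS$, i.e. $RS$ \emph{extends} its adjoint rather than being symmetric, so surjectivity of $T\pm iI$ transfers to $RS\pm iI$ but no injectivity is available, and a deficiency-index type argument has nothing to bite on. The actual proof of Devinatz--Nussbaum--von Neumann passes through strong commutativity of the spectral measures of $R$ and $S$ and cannot be recovered by these manipulations.

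Item (3) contains an outright error in addition to the deferral. The hypothesis is $B^{-1}\in B(H)$, not $B\in B(H)$, so $B$ and $B^{1/2}$ may well be unbounded and your ``bounded, self-adjoint and boundedly invertible $B^{1/2}$'' is not available. More seriously, the normality of $AB$ cannot be ``read off from the conjugation by $B^{1/2}$ and the positivity of $B$'': for generic self-adjoint $A$ and positive boundedly invertible $B$ the product $AB$ is \emph{not} normal (already for $2\times 2$ matrices), and similarity to a self-adjoint operator does not imply normality. In (3) the normality of $AB$ has to be extracted from the hypothesis that its extension $C$ is normal, via a Fuglede--Putnam argument of the kind used in Theorem \ref{normality AB BA bar commu THM} and in the cited reference. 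Item (4), by contrast, can be completed along your lines provided you invoke Lemma \ref{mortad normality AB BA O&M Lemma}: from $AB\subset C\subset C^*\subset (AB)^*=BA$ one gets $AB\subset BA$, hence $BA$ is normal, hence $(BA)^*\subset \overline{C}=C$ with $(BA)^*$ normal and $C$ symmetric forces $(BA)^*=C$ by maximal symmetry, and then $C=C^*=BA$; as written, however, you again leave the decisive normality step unproven.
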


\begin{rema}
As observed in \cite{DevNussbaum}, the first statement in the
previous theorem does not extend to normal operators. Indeed, just
in the naive case of unitary operators, we have that a product of
\textit{any} two unitary operators is always unitary even when the
two factors of the product do not commute. This observation
motivates the investigation in the case where one operator is
normal.
\end{rema}

\begin{rema}
Another natural question may pop up. In
\cite{DevNussbaum-von-Neumann}, the authors before showing that
$T=RS$, they first showed that $R$ and $S$ commute strongly (i.e.
the corresponding spectral measures commute). So what if we have
$T\subset ABC$, do we still expect $T=ABC$ when all of $T,A,B,C$ are
self-adjoint? The answer is negative (at least as far as the idea of
their proof is concerned). Indeed, we can have a self-adjoint
product of three self-adjoint operators which do not commute
pairwise. In $\R^2$, consider the following self-adjoint matrices:
\[A=\left(
      \begin{array}{cc}
        1 & 1 \\
        1 & 0 \\
      \end{array}
    \right),~B=\left(
                 \begin{array}{cc}
                   0 & 1 \\
                   1 & 2 \\
                 \end{array}
               \right)\text{ and } C=\left(
                                       \begin{array}{cc}
                                         0 & 1 \\
                                         1 & 0 \\
                                       \end{array}
                                     \right).
\]
Then
\[ABC=\left(
                                       \begin{array}{cc}
                                         3 & 1 \\
                                         1 & 0 \\
                                       \end{array}
                                     \right)\]
is self-adjoint. Nevertheless, none of the products $AB$, $AC$ and
$BC$ is self-adjoint, that is,
\[AB\neq BA,~AC\neq CA \text{ and } BC\neq CB.\]
\end{rema}

\section{Some Results on Normality}

The normality of unbounded products of normal operators has been
studied recently. See e.g. \cite{Gustafson-Mortad-2014} and the
references therein. We recall

\begin{lem}\label{mortad normality AB BA O&M Lemma}(\cite{mortad-commutatvity-devinatz-2013}, cf. \cite{DevNussbaum})
Let $A,B$ be normal operators with $B^{-1}\in B(H)$. If $AB\subset
BA$, then $\overline{AB}$ and $BA$ are both normal.
\end{lem}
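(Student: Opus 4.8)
The plan is to reduce the hypothesis $AB\subset BA$ to a genuine (strong) commutativity statement via the Fuglede--Putnam theorem, and then to read off normality of the product from a joint spectral resolution of $A$ and $B$. First I would produce the basic commutation relation with the bounded inverse: left- and right-multiplying $AB\subset BA$ by the bounded operator $B^{-1}$ and using $BB^{-1}=I$ together with $B^{-1}B\subset I$, one obtains
\[B^{-1}A\subset AB^{-1}.\]
Since $B^{-1}\in B(H)$ is normal and $A$ is normal, the Fuglede--Putnam theorem applies with $T=B^{-1}$ and $M=N=A$, yielding $B^{-1}A^*\subset A^*B^{-1}$; taking adjoints and applying Fuglede--Putnam once more gives the companion relations for $(B^{-1})^*=(B^*)^{-1}$. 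Thus the bounded normal operator $B^{-1}$ commutes with both $A$ and $A^*$, hence with the spectral measure $E_A$; and by spectral calculus (inverting on the spectrum of $B^{-1}$, which is bounded away from $0$) the spectral measures $E_A$ and $E_B$ commute. In other words, $A$ and $B$ strongly commute.

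Next I would settle the closedness and the two adjoints. Because $B^{-1}\in B(H)$ and $A$ is closed, the product $BA$ is closed by the closedness criterion recalled in the Introduction, while $(AB)^*=B^*A^*$ since $B^{-1}\in B(H)$. The strong commutativity then lets me show $\overline{AB}=BA$: using the spectral projections $E_n=E_B(\{|z|\le n\})$, whose ranges lie in $D(B)$, which converge strongly to $I$, and which (being bounded functions of $E_B$) commute with $A$, one checks that for $x\in D(BA)$ the vectors $x_n=E_nx$ lie in $D(AB)$ with $x_n\to x$ and $ABx_n=E_n(BAx)\to BAx$; hence $BA\subset\overline{AB}$, and the reverse inclusion is automatic. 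Consequently $(BA)^*=(\overline{AB})^*=(AB)^*=B^*A^*$.

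Finally, normality of $BA$ would follow from strong commutativity: in the joint spectral representation of the commuting pair $(A,B)$ the operator $BA$ acts as multiplication by $zw$ and $(BA)^*=B^*A^*$ as multiplication by $\overline{zw}$, so that $D(BA)=D((BA)^*)$ and $\|BAx\|=\|(BA)^*x\|$ for every $x$ in this common domain; thus $BA$ is normal, and since $\overline{AB}=BA$ the operator $\overline{AB}$ is normal as well. Alternatively, once $BA$ is known to be normal, the equality $\overline{AB}=BA$ also drops out of the maximality of normal operators in Theorem \ref{thm devinat mortad one condition}(3), as $\overline{AB}$ is then a normal restriction of the normal operator $BA$.

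I expect the main obstacle to be the passage from these formal, inclusion-level identities to honest operator equalities: the general product rules only deliver one-sided inclusions for $(BA)^*$ and for the various commutators, and controlling the domains of the unbounded products $BA$, $AB$ and their adjoints is delicate. This is exactly where the hypothesis $B^{-1}\in B(H)$ (which makes $BA$ closed, forces $(AB)^*=B^*A^*$, and keeps the spectrum of $B$ bounded away from $0$) and the Fuglede--Putnam theorem (which upgrades commutation with $B^{-1}$ to strong commutativity) do the decisive work.
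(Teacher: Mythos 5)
The paper itself offers no proof of this lemma: it is quoted verbatim from \cite{mortad-commutatvity-devinatz-2013} (cf. \cite{DevNussbaum}), so there is nothing internal to compare against line by line. The nearest relative is the proof of Theorem \ref{normality AB BA bar commu THM}, which handles the mirror situation ($BA\subset AB$, $B$ bounded) by a purely algebraic route: Fuglede--Putnam to get $BA^*\subset A^*B$, then sandwiching $(AB)^*AB$ between two products of self-adjoint operators and invoking the Devinatz--Nussbaum--von Neumann maximality result (Theorem \ref{maximality product mains RECALL THM}(1)) to force equality. Your route is genuinely different: you upgrade $AB\subset BA$ to strong commutativity of the spectral measures $E_A$ and $E_B$ and then read everything off a joint spectral representation. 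That is a legitimate and in some ways more transparent strategy (it immediately explains \emph{why} the product is normal), at the cost of importing the full spectral machinery for commuting families, whereas the paper's algebraic method needs only Fuglede--Putnam plus one maximality theorem.

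Two points in your sketch need repair. First, $\sigma(B^{-1})$ is \emph{not} bounded away from $0$ when $B$ is unbounded; what is true (and what you actually need) is that $B^{-1}$ is injective, so $E_{B^{-1}}(\{0\})=0$ and the spectral measure of $B=(B^{-1})^{-1}$ is the push-forward of $E_{B^{-1}}$ under $z\mapsto 1/z$; commutation of $E_A$ with $E_{B^{-1}}$ then transfers to $E_B$. Second, and more substantively, the assertion that $BA$ ``acts as multiplication by $zw$'' conflates the operator product $BA$, whose domain is $\{f: zf\in L^2,\ wzf\in L^2\}$, with the maximal multiplication operator $M_{zw}$, whose domain is $\{f: zwf\in L^2\}$; these differ in general. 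Here they coincide precisely because $B^{-1}\in B(H)$ forces $|w|\ge \|B^{-1}\|^{-1}$ $E$-a.e., so $zwf\in L^2$ already implies $zf\in L^2$. Without spelling this out the normality of $BA$ does not follow from the representation. Finally, your closing ``alternative'' (deducing $\overline{AB}=BA$ from maximality of normal operators) is circular as written, since it presupposes that $\overline{AB}$ is normal, which is part of what is to be proved; your main approximation argument with $E_n=E_B(\{|z|\le n\})$ is the one that actually works, and it does go through once one notes that $BE_n$ is bounded and commutes with $E_A$, hence leaves $D(A)$ invariant.
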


The chosen idea of proof of the following result is via the
Fuglede-Putnam Theorem (for a different proof, we may proceed as in
\cite{Bong-Mortad-Stochel}).

\begin{thm}\label{normality AB BA bar commu THM}
Let $A,B$ be normal operators with $B\in B(H)$. If $BA\subset AB$,
then $AB$ and $\overline{BA}$ are both normal (and so
$AB=\overline{BA}$).
\end{thm}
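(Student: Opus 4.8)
The plan is to use the Fuglede--Putnam theorem to upgrade the single inclusion $BA\subset AB$ into commutation of $B$ with the \emph{whole} spectral measure of $A$, and then to read off normality from the spectral calculus of $A$ together with the normality of $B$. First, since $B\in B(H)$ and $A$ is normal, I would apply Fuglede--Putnam (with $T=B$, $M=N=A$) to $BA\subset AB$ to get $BA^*\subset A^*B$. Taking adjoints, and using that $B$ is bounded so that $(BA)^*=A^*B^*$, also yields $B^*A^*\subset A^*B^*$, and a second application of Fuglede--Putnam gives $B^*A\subset AB^*$. Thus $B$ commutes with both $A$ and $A^*$; the consequence I want to extract is that $B$ (and hence $B^*$) commutes with the spectral measure $E$ of $A$, and in particular with the truncating projections $P_n=E(\{|z|\le n\})$. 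Simultaneously, $AB$ is closed (product of the closed $A$ with the bounded $B$) and densely defined, since $BA\subset AB$ forces $B(D(A))\subset D(A)$, whence $D(A)\subset D(AB)$.

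Next I would prove $\overline{BA}=AB$. As $AB$ is closed and contains $BA$, the inclusion $\overline{BA}\subset AB$ is automatic, so only $AB\subset\overline{BA}$, i.e.\ that $D(A)$ is a core for $AB$, needs work. Given $x\in D(AB)$ (so $Bx\in D(A)$), I set $x_n=P_nx\in D(A)$; then $x_n\to x$, and using $BP_n=P_nB$ together with $BA\subset AB$ and the commutation of $A$ with $P_n$ on $D(A)$ one computes $BAx_n=A P_n(Bx)=P_n(ABx)\to ABx$. Hence $x\in D(\overline{BA})$ with $\overline{BA}x=ABx$, so $\overline{BA}=AB$; in particular $(AB)^*=(BA)^*=A^*B^*$.

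Finally I would establish the normality of $AB$ through the spectral calculus. For $w\in H$ put $\mu_w(\cdot)=\|E(\cdot)w\|^2$; then $w\in D(A)=D(A^*)$ iff $\int|z|^2\,d\mu_w<\infty$, with $\|Aw\|^2=\|A^*w\|^2=\int|z|^2\,d\mu_w$. Because $B$ commutes with $E$ and is normal, for every Borel $\omega$ one has $\|E(\omega)Bx\|=\|BE(\omega)x\|=\|B^*E(\omega)x\|=\|E(\omega)B^*x\|$, so $\mu_{Bx}=\mu_{B^*x}$. This forces $D(AB)=\{x:Bx\in D(A)\}=\{x:B^*x\in D(A)\}=D(A^*B^*)=D((AB)^*)$ and, on this common domain, $\|ABx\|^2=\int|z|^2\,d\mu_{Bx}=\int|z|^2\,d\mu_{B^*x}=\|A^*B^*x\|^2=\|(AB)^*x\|^2$. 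Hence $AB$ is closed, densely defined, with $D(AB)=D((AB)^*)$ and $\|ABx\|=\|(AB)^*x\|$, so $AB$ is normal, and since $\overline{BA}=AB$ the operator $\overline{BA}$ is normal as well. (Alternatively, once both are seen to be normal, the equality $\overline{BA}=AB$ follows from the maximality of normal operators, Theorem \ref{thm devinat mortad one condition}(3).)

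The main obstacle I anticipate is precisely the passage from the pair of inclusions $BA\subset AB$ and $BA^*\subset A^*B$ to genuine commutation of the bounded $B$ with the spectral measure $E$ of the \emph{unbounded} normal operator $A$: this is where one must invoke the unbounded Fuglede--Putnam theorem and, if made fully explicit, pass through the strongly commuting self-adjoint real and imaginary parts of $A$. Everything downstream---the core identity $\overline{BA}=AB$ and the domain equality $D(AB)=D((AB)^*)$---hinges on this commutation, while the normality of $B$ enters crucially and only at the last step, via $\|Bv\|=\|B^*v\|$, to identify the measures $\mu_{Bx}$ and $\mu_{B^*x}$.
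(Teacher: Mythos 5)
Your proposal is correct, but it follows a genuinely different route from the paper's proof --- essentially the spectral-theoretic alternative the authors allude to when they say one "may proceed as in \cite{Bong-Mortad-Stochel}". The paper stays entirely at the level of operator inclusions: from $BA\subset AB$ Fuglede gives $BA^{*}\subset A^{*}B$, whence a chain $B^{*}BAA^{*}\subset\cdots\subset (AB)^{*}AB$; since $AB$ is closed, $(AB)^{*}AB$ is self-adjoint, and the Devinatz--Nussbaum--von Neumann maximality theorem (Theorem \ref{maximality product mains RECALL THM}(1)) upgrades the resulting inclusion to the equality $(AB)^{*}AB=AA^{*}BB^{*}=AB(AB)^{*}$; normality of $\overline{BA}=(A^{*}B^{*})^{*}$ is then obtained by applying the first part to the normal pair $A^{*},B^{*}$. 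You instead push Fuglede--Putnam all the way to commutation of $B$ (hence $B^{*}$) with the spectral measure $E$ of $A$ --- this is exactly Rudin's Theorem 13.33, so the step you flag as the main obstacle is a citable known result --- and then read everything off the scalar measures $\mu_{w}=\|E(\cdot)w\|^{2}$: normality of $B$ gives $\mu_{Bx}=\mu_{B^{*}x}$, hence $D(AB)=D(A^{*}B^{*})=D((AB)^{*})$ and $\|ABx\|=\|(AB)^{*}x\|$, which is the definition of normality used in the paper; your truncation argument with $P_{n}=E(\{|z|\le n\})$ is sound (the chain $BAP_{n}x=ABP_{n}x=AP_{n}Bx=P_{n}ABx\to ABx$ correctly avoids needing $x\in D(A)$) and establishes $\overline{BA}=AB$ directly rather than deducing it a posteriori from maximal normality. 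What each approach buys: the paper's argument avoids any explicit appeal to the spectral theorem of $A$ (beyond what is hidden in Fuglede and in the DNvN theorem) and is shorter, whereas yours makes the domain equality $D(AB)=D((AB)^{*})$ and the core identity completely transparent and isolates precisely where the normality of $B$ enters.
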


\begin{proof}Since $BA\subset AB$, Fuglede Theorem yields
$BA^*\subset A^*B$. Hence (since also $AB$ is densely defined),
\[B^*BAA^*\subset B^*ABA^*\subset B^*AA^*B=B^*A^*AB\subset (AB)^*AB.\]
Since $AB$ is closed, it follows that $(AB)^*AB$ is self-adjoint,
and by the boundedness of $B^*B$, we get
\[(AB)^*AB\subset AA^*B^*B \text{ or merely } (AB)^*AB=AA^*B^*B=AA^*BB^*\]
by Theorem \ref{maximality product mains RECALL THM}. Similarly, we
obtain
\[AB(AB)^*=AA^*BB^*,\]
and this marks the end of the proof of the normality of $AB$.

To show that $\overline{BA}$ is normal, we first observe that
\[\overline{BA}=(BA)^{**}=(A^*B^*)^*.\]
Now, since $BA\subset AB$, clearly $B^*A^*\subset A^*B^*$. The first
part of the proof leads to the normality of $A^*B^*$ because both
$A^*$ and $B^*$ are normal. Accordingly, $(A^*B^*)^*$ too is normal,
that is, $\overline{BA}$ is normal.
\end{proof}

The following result is known to most readers (a proof based on the
spectral theorem may be found in \cite{Bong-Mortad-Stochel}). We can
equally regard it as a consequence of the preceding theorem:

\begin{cor}\label{Corollary product self-adjoint one bd}
Let $A,B$ be self-adjoint operators with $B\in B(H)$. If $BA\subset
AB$, then $AB$ and $\overline{BA}$ are both self-adjoint.
\end{cor}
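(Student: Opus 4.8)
The plan is to obtain this corollary directly from Theorem \ref{normality AB BA bar commu THM}, upgrading its conclusion from ``normal'' to ``self-adjoint'' by exploiting that the factors are self-adjoint together with the boundedness of $B$. First I would note that self-adjoint operators are in particular normal: since $A=A^*$ we have $AA^*=A^2=A^*A$ with $A$ closed, while $B\in B(H)$ with $B=B^*$ is a bounded normal operator. Hence the hypotheses of Theorem \ref{normality AB BA bar commu THM} are satisfied, and it already delivers that $AB$ and $\overline{BA}$ are normal with $AB=\overline{BA}$. All that remains is to identify this common operator with its own adjoint.

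The key computation is to evaluate $(BA)^*$ exactly. Writing the product $BA$ in the form $ST$ with $S=B$ and $T=A$, the boundedness $B\in B(H)$ puts us in the equality case of the adjoint-of-a-product rule recalled in the Introduction, so that $(BA)^*=A^*B^*=AB$ using $A^*=A$ and $B^*=B$. Here $BA$ is densely defined (its domain being $D(A)$) and closable, since its adjoint $AB$ is densely defined: indeed $D(AB)\supseteq D(BA)=D(A)$ by the inclusion $BA\subset AB$. Consequently $\overline{BA}=(BA)^{**}=(AB)^*$.

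Combining the two facts then gives $AB=\overline{BA}=(AB)^*$, which is precisely the self-adjointness of $AB$; and since $\overline{BA}=AB$, the operator $\overline{BA}$ is self-adjoint as well. As an alternative route one could bypass the identity $AB=\overline{BA}$ and argue by symmetry plus normality: from $BA\subset AB$ and the closedness of $AB$ (a product of the closed $A$ with the bounded $B$) one gets $(AB)^*=\overline{BA}\subseteq AB$, and the normality supplied by the theorem forces $D((AB)^*)=D(AB)$, turning this inclusion into an equality.

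The only point that genuinely requires care is the passage from the general inclusion $A^*B^*\subseteq (BA)^*$ to the full equality $(BA)^*=AB$, which really uses $B\in B(H)$, together with the accompanying check that $BA$ is closable so that the identification $\overline{BA}=(BA)^{**}$ is legitimate.
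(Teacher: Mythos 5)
Your proof is correct and follows essentially the same route as the paper: both reduce to Theorem \ref{normality AB BA bar commu THM} and both rest on the exact identity $(BA)^*=A^*B^*=AB$ coming from $B\in B(H)$. The only cosmetic difference is the last step — the paper notes that $\overline{BA}\subset AB=(BA)^*$ makes $\overline{BA}$ symmetric and hence (being normal) self-adjoint, while you invoke the theorem's equality $AB=\overline{BA}=(AB)^*$ directly — and both versions are sound.
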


\begin{proof}Since $BA\subset AB$, and $A$ and $B$ are self-adjoint,
the previous theorem yields the normality of $\overline{BA}$. But
\[BA\subset AB\Longrightarrow \overline{BA}\subset AB=(BA)^*,\]
i.e. $\overline{BA}$ is symmetric as well. Therefore,
$\overline{BA}$ is self-adjoint. Accordingly,
\[AB=(BA)^*=\overline{BA},\]
and so $AB$ is also self-adjoint, as required.
\end{proof}

\section{Main Results on Maximality}

The same idea of proof of (Theorem 5.31, \cite{WEI}, discussed
above) may lead to the following result which seems to have escaped
notice up to now.

\begin{pro}\label{maximality S T T* propo S.A.}Let $S$ be a densely defined operator such that $S\subset T$ and $S\subset
T^*$. If $D(T)=D(T^*)$, then $T$ is self-adjoint.
\end{pro}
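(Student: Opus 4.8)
The plan is to show directly that $T=T^*$ by exploiting the fact that both $S\subset T$ and $S\subset T^*$ force $T$ and $T^*$ to be restrictions of the single operator $S^*$. First I would record the standing density facts: since $S\subset T$ and $S$ is densely defined, already $D(S)\subset D(T)$ is dense, so $T$ is densely defined and $T^*$ exists. Moreover $D(T^*)=D(T)\supset D(S)$ is dense as well, hence $T^*$ is densely defined, $T$ is closable, and $T\subset T^{**}=\overline{T}$.

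The key mechanism is the order-reversal of adjoints under inclusion. From $S\subset T$ I obtain $T^*\subset S^*$, so that $T^*x=S^*x$ for every $x\in D(T^*)$. From $S\subset T^*$ I obtain, upon taking adjoints once more, $T^{**}\subset S^*$; since $T\subset T^{**}$, this yields $Tx=T^{**}x=S^*x$ for every $x\in D(T)$. Thus both $T$ and $T^*$ act as $S^*$ on their respective domains.

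Now I would combine these two identities on the common domain supplied by the hypothesis. Fixing $x\in D(T)=D(T^*)$, the first relation gives $T^*x=S^*x$ and the second gives $Tx=S^*x$, whence $Tx=T^*x$. Since this holds for every $x$ in the shared domain $D(T)=D(T^*)$, I conclude $T=T^*$, that is, $T$ is self-adjoint. This is precisely the same mechanism as in the proof of (Theorem 5.31, \cite{WEI}) and of the preceding proposition, now phrased for the pair $T,T^*$ rather than for two a priori distinct self-adjoint operators.

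I expect the only delicate point to be the bookkeeping of the adjoint inclusions—specifically, confirming that $T^*$ is densely defined so that $T^{**}$ is legitimate and coincides with $\overline{T}$—rather than any substantive analytic difficulty. The assumption $D(T)=D(T^*)$ does exactly the work of allowing the two restrictions of $S^*$ to be compared pointwise on one and the same domain; without it, the final pointwise identification of $T$ with $T^*$ would not be available.
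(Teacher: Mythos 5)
Your proof is correct: every step checks out, including the one point that needs care, namely that $T^*$ is densely defined (since $D(T^*)=D(T)\supset D(S)$) so that $T^{**}$ exists and $T\subset T^{**}\subset S^*$ is legitimate. The route you take is a mild repackaging of the paper's argument rather than a genuinely different one: the paper fixes $x\in D(T)=D(T^*)$ and $y\in D(S)$ and computes $\langle Tx,y\rangle=\langle x,T^*y\rangle=\langle x,Sy\rangle=\langle x,Ty\rangle=\langle T^*x,y\rangle$, concluding $Tx=T^*x$ by density of $D(S)$; that chain of equalities is exactly the assertion, unwound to the level of inner products, that both $T$ and $T^*$ restrict $S^*$ on the common domain. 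What your version buys is a cleaner conceptual statement (two applications of the order-reversal $A\subset B\Rightarrow B^*\subset A^*$) at the cost of having to introduce and justify the auxiliary objects $S^*$ and $T^{**}$; the paper's version avoids forming any adjoint of $S$ and works directly from the density of $D(S)$. Both hinge on the same use of the hypothesis $D(T)=D(T^*)$, which, as you correctly note, is what permits the pointwise comparison of $T$ with $T^*$ on a single domain.
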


\begin{proof}For all $x\in D(T)=D(T^*)$ and for all $y\in
D(S)\subset D(T)=D(T^*)$ we may write
\begin{align*}
<Tx,y>=&<x,T^*y>\\
=&<x,Sy>\\
=&<x,Ty>\\
=&<T^*x,y>.
\end{align*}
Since $D(S)$ is dense, it follows that $Tx=T^*x$ for all $x\in
D(T)=D(T^*)$, that is, $T$ is self-adjoint.
\end{proof}

\begin{cor}Let $S$ be a densely defined operator such that $S\subset T$ and $S\subset
T^*$. If $T$ is normal, then it is self-adjoint.
\end{cor}

The next result is perhaps known:

\begin{pro}Let $A,B$ be two linear operators on a Hilbert space $H$. Assume
also that $B\in B(H)$. Assume further that $A$ has a domain $D(A)$
and that $A\subset B$.
\begin{enumerate}
  \item We do not necessarily have $A=B$ if $A$ is densely defined but not
  closed.
  \item We do not necessarily have $A=B$ if $A$ is closed but not densely
  defined.
  \item Assume now that $A$ is closed. Then
  \[A=B\Longleftrightarrow \overline{D(A)}=H.\]
  Particularly, if $C$ is invertible, then
  \[AC\subset B\Longrightarrow AC=B.\]
\end{enumerate}

\end{pro}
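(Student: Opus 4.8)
The plan is to treat the three items separately: items (1) and (2) by explicit counterexamples, and item (3) by a short closed-graph argument followed by an application to the product $AC$.

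For (1) I would take $B=I\in B(H)$ and let $A=I|_{D}$ be its restriction to a dense proper subspace $D\subsetneq H$, for instance $D=c_{00}$ (the finitely supported sequences) inside $H=\ell^2(\N)$. Then $A\subset B$ and $A$ is densely defined, while its graph $\{(x,x):x\in D\}$ has closure $\{(x,x):x\in H\}$, so $A$ is not closed and $A\neq B$. For (2) I would again take $B=I$ but restrict to a proper \emph{closed} subspace $D\subsetneq H$ (e.g. a line in $H=\C^2$); now the graph is already closed, so $A$ is closed but not densely defined, and again $A\neq B$.

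For the equivalence in (3), the direction $A=B\Rightarrow\overline{D(A)}=H$ is immediate since $D(B)=H$. For the converse I would first note that $A\subset B$ with $B\in B(H)$ forces $A$ to be bounded on its domain, namely $\|Ax\|=\|Bx\|\le\|B\|\,\|x\|$ for all $x\in D(A)$. The key step is then that a closed operator which is bounded on its domain has a \emph{closed} domain: if $x_n\in D(A)$ with $x_n\to x$, then $(Ax_n)$ is Cauchy by the above bound, hence $Ax_n\to y$, and closedness of $A$ gives $x\in D(A)$. Combined with $\overline{D(A)}=H$ this yields $D(A)=H$, and since $A$ and $B$ agree on $D(A)=H=D(B)$ we conclude $A=B$.

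For the final clause the natural move is to apply item (3) not to $A$ but to the product $T:=AC$. Using the invertibility of $C$, i.e. $C^{-1}\in B(H)$ with $CC^{-1}=I$, the inclusion $AC\subset B$ rearranges as $A=(AC)C^{-1}\subset BC^{-1}$, where $BC^{-1}\in B(H)$, so one may work either with $A\subset BC^{-1}$ or directly with $AC\subset B$. In either formulation item (3) finishes the job once its two hypotheses are checked. I expect the verification that $AC$ is closed and that $\overline{D(AC)}=H$ to be the main obstacle: closedness should follow from the product rules recalled in the Introduction applied to $A$ (closed) and $C$ (whose inverse is bounded), while the density of $D(AC)=\{x\in D(C):Cx\in D(A)\}$ should be transported from that of $D(A)$ through the bounded bijection $C^{-1}$. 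This is precisely the point where the invertibility of $C$ is essential, and granting these two facts item (3) gives $AC=B$.
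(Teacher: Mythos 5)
Your treatment of items (1) and (2) and of the equivalence in item (3) is correct and essentially the paper's: the paper likewise takes for (1) a restriction of $B$ to a dense non-closed subspace, for (2) the zero operator on $D(A)=\{0\}$ (your restriction of $I$ to a proper closed subspace works just as well), and for (3) the same observation that a closed operator which is bounded on its domain has closed domain -- you supply the short Cauchy-sequence argument that the paper leaves implicit.

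The gap is in the final clause, where you announce two routes and then defer precisely the two verifications on which everything hinges. The route you emphasize -- applying item (3) to $T:=AC$ -- requires $AC$ to be closed and $D(AC)$ to be dense, and neither follows from what you invoke: the product rules recalled in the introduction give closedness of $ST$ when $S$ is closed and $T\in B(H)$, or when $S^{-1}\in B(H)$ and $T$ is closed, so applied here they yield the closedness of $CA$, not of $AC$. In fact $AC$ need not be closed: take $C$ unbounded self-adjoint with $C^{-1}\in B(H)$, $A=C^{-1}$ and $B=I$; then $AC=I_{D(C)}$, whose graph is not closed because $D(C)$ is dense and proper. The paper sidesteps this entirely by applying item (3) to $A$ itself rather than to $AC$: from $AC\subset B$ and $CC^{-1}=I$ one gets $A=(AC)C^{-1}\subset BC^{-1}\in B(H)$, and since $A$ is closed (and densely defined) item (3) yields $A=BC^{-1}$, whence $AC=BC^{-1}C=B$. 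You mention this rearrangement but do not pursue it; it is the route you should have completed, as it needs no hypothesis on $AC$ at all. Be aware, finally, that the very last step uses $C^{-1}C=I$, whereas the paper's definition of invertibility only guarantees $C^{-1}C\subset I$, i.e.\ $C^{-1}C=I_{D(C)}$; the example above (which is essentially the paper's own remark preceding Theorem \ref{GGHH}) shows that $AC=B$ genuinely fails when $D(C)\subsetneq H$, so the clause must be read with $D(C)=H$ -- a caveat neither your argument nor the paper's makes explicit.
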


\begin{proof}First, remember that $A\subset B$ means that $Ax=Bx$ for
all $x\in D(A)$, i.e. $A$ is bounded on $D(A)$.

\begin{enumerate}
  \item We only have $B=\overline{A}$. Since $A$ is densely defined, from $A\subset
  B$, we get that $B^*\subset A^*$. But $D(B^*)=H$ and so $B^*=A^*$.
  Hence
  \[B=A^{**}=\overline{A}.\]

  For a counterexample, just consider $A=B|D$ ($B$ restricted to
  $D$) where $D$ is dense in $H$ but not closed. Since $D$ is not closed,
  $A$, which is bounded on $D$, cannot be closed. Observe in the end that $A\neq B$
  because $D\neq H$!
  \item Just consider $A=0$ (the zero operator) on the
  trivial domain $D(A)=\{0\}$. Take $B$ to be any non-zero bounded operator. Since $A(0)=0=B(0)$, we see plainly that
  $A\subset B$. Finally, it is clear that $A$ is closed on $D(A)$,
  that $D(A)$ is not dense in $H$ and that $A\neq B$.
  \item The implication "$\Rightarrow$" is evident. One way
  of proving the reverse implication is as follows:
 As mentioned above, $A$ is bounded on $D(A)$. Since $A$ is
    closed, $D(A)$ is closed. By hypothesis, $\overline{D(A)}=H$ and
    so $D(A)=H$. This leads to $A=B$.

  Finally, observe that as $AC\subset B$ and $C$ is invertible, we then get that $A\subset BC^{-1}$. By the
  first part of this answer and since $BC^{-1}\in B(H)$, we obtain
  $A=BC^{-1}$. Thus,
  \[AC=BC^{-1}C=B,\]
  as required.
\end{enumerate}
\end{proof}

Closely related to the foregoing theorem, we have:

\begin{lem}\label{Lemma basic stochel}
Assume that $S$ is closed and densely defined in $H$, $B\in B(H)$ is
self-adjoint and $SB\subset I$. Then $B$ is injective, $M=D(SB)$ is
closed and $SB=I_M$.
\end{lem}

\begin{proof}
Since $S$ is closed and $B\in B(H)$, the general theory says that
$SB$ is closed. This combined with $SB\subset I$ completes the
proof.
\end{proof}

\begin{pro}\label{Stochel Proposition}
Assume that $B\in B(H)$ is injective and self-adjoint, and $B^{-1}$
is not bounded. Then there exists a closed, densely defined and
injective operator $S$ in $H$ such that $SB\subset I$ and $SB$ is
not densely defined.
\end{pro}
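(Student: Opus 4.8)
The plan is to take $S$ to be a suitable restriction of $B^{-1}$. The key preliminary observation is that $\ran(B)\neq H$: indeed, were $B$ surjective it would be a bounded bijection of $H$, whence $B^{-1}\in B(H)$ by the bounded inverse theorem, contrary to hypothesis. Fix therefore a unit vector $v\in H\setminus\ran(B)$ and set $M=\{v\}^{\perp}$, a proper closed hyperplane of $H$. I would then define $S$ to be the restriction of $B^{-1}$ to the domain $D(S)=B(M)=\{Bm:m\in M\}\subset\ran(B)$, so that $S(Bm)=m$ for $m\in M$.

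The first thing to check is that $S$ is densely defined, i.e. that $B(M)$ is dense. Since $B=B^*$, for any $z\in H$ one has $\langle Bm,z\rangle=\langle m,Bz\rangle$, so $z\perp B(M)$ forces $Bz\in M^{\perp}=\C v$. If $z\neq 0$ then $Bz\neq 0$ by injectivity, and $Bz=\lambda v$ with $\lambda\neq 0$ would give $v=B(z/\lambda)\in\ran(B)$, a contradiction. Hence $(B(M))^{\perp}=\{0\}$ and $D(S)$ is dense. Injectivity of $S$ is immediate, as it is a restriction of the injective map $B^{-1}$.

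Next I would verify that $S$ is closed by describing its graph as $G(S)=\{(Bm,m):m\in M\}$. If $Bm_k\to y$ and $m_k\to w$ in $H$, then $w\in M$ because $M$ is closed, and continuity of $B$ gives $y=\lim Bm_k=Bw$; thus $(y,w)=(Bw,w)\in G(S)$, which proves that $G(S)$ is closed.

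Finally, for the two statements about $SB$: by definition $D(SB)=\{x\in H:Bx\in D(S)\}$, and $Bx\in B(M)$ is equivalent, by injectivity of $B$, to $x\in M$; hence $D(SB)=M$. For $x\in M$ we get $SBx=B^{-1}(Bx)=x$, so $SB=I_M\subset I$, while $D(SB)=M$ is a proper closed subspace and therefore not dense. I expect the only real subtlety to be the simultaneous requirements that $S$ be densely defined yet $SB$ fail to be densely defined; these seemingly conflicting demands are reconciled precisely by the fact that the bounded operator $B$ carries the non-dense hyperplane $M$ onto the dense subspace $B(M)$, which is exactly where the hypothesis $v\notin\ran(B)$ is used.
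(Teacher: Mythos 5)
Your proof is correct, but it takes a genuinely different and more elementary route than the paper. The paper works on the side of $A:=B^{-1}$: it picks $e\in D(A)\setminus D(A^2)$ (such a vector exists by Lemma A.1 of S\'ebesty\'en--Stochel), forms $M=D(A)\ominus_A\langle e\rangle$, the orthogonal complement of $\C e$ in the graph inner product of $A$, invokes Lemma 3.2 of that paper for the density of $M$ in $H$, sets $S=A|_M$, and then uses closedness of $SB$ to conclude that $D(SB)=A(M)\subsetneq H$ cannot be dense. You instead work on the side of $B$: you pick $v\notin\ran(B)$ (available because $B$ surjective would force $B^{-1}\in B(H)$), restrict $B^{-1}$ to $B(\{v\}^\perp)$, and verify density, closedness and $D(SB)=\{v\}^\perp$ by direct computation. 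The two constructions are essentially equivalent --- your domain is $\{y\in D(A):\langle Ay,v\rangle=0\}$, which is the kernel of a graph-norm-continuous functional on $D(A)$, i.e.\ a set of the form $D(A)\ominus_A\langle e\rangle$, and one checks that the representing vector $e$ lies outside $D(A^2)$ precisely when $v\notin\ran(B)$ --- but your presentation is self-contained: it needs neither the graph-inner-product machinery nor the two external lemmas, and it identifies $D(SB)$ exactly as the closed hyperplane $\{v\}^\perp$ rather than arguing indirectly that $D(SB)$ is closed and proper. Your closing remark correctly isolates where the hypothesis $v\notin\ran(B)$ enters (density of $B(M)$ despite non-density of $M$), which is indeed the crux of the example.
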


\begin{proof}
Since, by assumption, $A:=B^{-1}$ is self-adjoint and unbounded, we
see that $D(A^2)\subsetneq D(A)$ (by applying Lemma A.1 in
\cite{SEbestyen-Stochel} to $R=|B|$)). Then, take a (necessarily
nonzero) vector $e\in D(A)\setminus D(A^2)$. It follows from Lemma
3.2 of \cite{SEbestyen-Stochel}, that $M:=D(A)\ominus_A <e>$ is a
vector subspace of $D(A)$ which is dense in $H$, where $\ominus$
designates the orthogonal difference with respect to the graph inner
product of $A$ (cf. \cite{SEbestyen-Stochel}) and $<e>=\C\cdot e$.
Set $S=A|_M$. Since $M$ is a closed vector subspace of $D(A)$ with
respect to the graph norm of $A$, we see that the operator $S$ is
closed, densely defined and injective. Then clearly
\[SB=\left(B^{-1}|_M\right)B\subset B^{-1}B=I\]
and, because $A$ is injective and $D(A)\ominus_A <e>\neq D(A)$, we
have
\[D(SB)=B^{-1}(D(S))==A(D(A)\ominus_A <e>)\subsetneq A(D(A))\subset H.\]
Since, by Lemma \ref{Lemma basic stochel}, $D(SB)$ is closed, we are
done.
\end{proof}

The following gives more information about Theorem \ref{maximality
product mains RECALL THM} is:

\begin{thm}\label{123}Let $A,B,T$ be non necessarily bounded operators such that $A$ is self-adjoint,
$B$ is symmetric with $B^{-1}\in B(H)$ (hence $B$ is self-adjoint)
and $T$ is symmetric. Assume further that $AB\subset T$. Then:
  \begin{enumerate}
    \item $AB\subset BA$.
    \item $BA$ is normal.
    \item $\overline{T}=(BA)^*$.
    \item $T$ is essentially self-adjoint.
  \end{enumerate}
If $T$ is also closed, then $BA$ is self-adjoint and
\[T=BA\text{ and } T=\overline{AB}.\]
\end{thm}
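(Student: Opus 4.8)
The engine of the whole argument is the hypothesis $B^{-1}\in B(H)$. The plan is to exploit it first to compute the adjoint of the product $AB$ exactly: since equality in the general inclusion $T^*S^*\subset(ST)^*$ is guaranteed as soon as the inner factor has a bounded inverse, and since $A,B$ are self-adjoint, I would record at the outset the identity $(AB)^*=B^*A^*=BA$. (Here $AB$ is densely defined, since $D(AB)=B^{-1}(D(A))$ and the continuous map $B^{-1}$ sends the dense domain $D(A)$ to a dense subset of $H$.) Essentially every assertion will be read off from this one identity.

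For (1), I would chase the inclusions $AB\subset T\subset T^*\subset(AB)^*=BA$: the first is the hypothesis, the second is symmetry of $T$, and the third is obtained by applying adjoints (which reverse inclusions) to $AB\subset T$. This already gives $AB\subset BA$, and as a free by-product the inclusion $T\subset BA$. Assertion (2) is then immediate from Lemma~\ref{mortad normality AB BA O&M Lemma}, applied to the normal operators $A,B$ with $B^{-1}\in B(H)$ and $AB\subset BA$: both $\overline{AB}$ and $BA$ are normal.

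The heart of the matter is (3), which I would obtain together with the self-adjointness of $BA$. First, $AB$ is closable (it is a restriction of the closed operator $BA$), so $\overline{AB}=(AB)^{**}=\big((AB)^*\big)^*=(BA)^*$. Now $\overline{AB}\subset BA$ with both operators normal, so maximality of normal operators (Theorem~\ref{thm devinat mortad one condition}(3)) forces $\overline{AB}=BA$; combining this with $\overline{AB}=(BA)^*$ gives $BA=(BA)^*$, i.e.\ $BA$ is self-adjoint. Taking closures in $AB\subset T\subset BA$ then squeezes $\overline{AB}\subset\overline{T}\subset BA$, and since $\overline{AB}=BA$ all three coincide: $\overline{T}=BA=(BA)^*$, which is (3). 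For (4) it then suffices to note that $\overline{T}$ is symmetric (being the closure of a symmetric operator) and equals the self-adjoint operator $BA$, so $\overline{T}$ is self-adjoint, i.e.\ $T$ is essentially self-adjoint. Finally, if $T$ is in addition closed then $T=\overline{T}=BA=\overline{AB}$, giving the last two equalities.

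I expect the main obstacle to be the passage in (3): recognizing that the normality of \emph{both} $\overline{AB}$ and $BA$ lets maximality of normal operators collapse the inclusion $\overline{AB}\subset BA$ into an equality, and that this equality is exactly what promotes $BA$ from merely normal to self-adjoint. Everything else is either the bookkeeping of adjoints and closures or a direct appeal to the quoted lemmas; the only points needing care are the standing density and closability checks (for $AB$, $T$, and $BA$) that license taking adjoints and closures throughout.
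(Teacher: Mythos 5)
Your proof is correct and follows the paper's architecture for the most part: you compute $(AB)^*=B^*A^*=BA$ using $B^{-1}\in B(H)$, chase $AB\subset T\subset T^*\subset BA$, and invoke Lemma~\ref{mortad normality AB BA O&M Lemma} for normality, exactly as the paper does. You diverge at step (3): the paper takes adjoints in $T^*\subset BA$ to get $(BA)^*\subset T^{**}=\overline{T}$ and then applies \emph{maximal symmetry} of the normal operator $(BA)^*$ inside the symmetric operator $\overline{T}$ to conclude $(BA)^*=\overline{T}$; you instead apply \emph{maximal normality} to $\overline{AB}\subset BA$ (both normal by the Lemma) to get $\overline{AB}=BA$, which combined with $\overline{AB}=(AB)^{**}=(BA)^*$ yields the self-adjointness of $BA$ outright, and you then squeeze $\overline{T}$ between $\overline{AB}$ and $BA$. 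Both routes rest on Theorem~\ref{thm devinat mortad one condition}; yours has the small advantage of establishing that $BA$ is self-adjoint \emph{without} the extra hypothesis that $T$ be closed (the paper's statement only records this under that hypothesis, though it too follows from the paper's argument since $\overline{T}$ is normal and symmetric). A further minor point in your favour: you justify the density of $D(AB)$ correctly, via $D(AB)=B^{-1}(D(A))$ and the fact that the bounded operator $B^{-1}$ has dense range $D(B)$, whereas the paper's remark that $AB$ is densely defined ``since $T$ is'' does not follow from $D(AB)\subset D(T)$ alone.
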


\begin{proof}\hfill
\begin{itemize}
  \item Since $T$ is densely defined, so is $AB$ and so
\[T^*\subset(AB)^*=B^*A^*=BA\]
since also $B^{-1}\in B(H)$ and $A$ and $B$ are self-adjoint. Since
$T$ is symmetric, we obtain
\[AB\subset T\subset T^*\subset BA.\]

Lemma \ref{mortad normality AB BA O&M Lemma} (or else) then yields
the normality of $BA$.

Now, since $T^*\subset BA$, we get $(BA)^*\subset
T^{**}=\overline{T}$. Because $BA$ is normal, so is $(BA)^*$. But,
normal operators are maximally symmetric. Therefore, we finally
infer that
\[(BA)^*=\overline{T},\]
i.e. $T$ is essentially self-adjoint (for $\overline{T}$ is normal
and symmetric).
  \item Suppose now that $T$ is also closed. From above, it is self-adjoint
  and $(BA)^*=T$. Hence
  \[T=(BA)^*=(BA)^{**}=\overline{BA}=BA\]
  since $BA$ is closed.

  In fine,
  \[\overline{AB}=(AB)^{**}=(BA)^*=T.\]
\end{itemize}
\end{proof}

\begin{cor}
Let $A,B,T$ be non necessarily bounded operators such that $A$ is
self-adjoint, $B$ is symmetric with $B^{-1}\in B(H)$ (hence $B$ is
self-adjoint) and $T$ is symmetric. Assume further that $AB\subset
T$. Then
\[A=BAB^{-1}.\]
\end{cor}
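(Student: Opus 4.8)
The plan is to derive the identity purely from the conclusions of Theorem \ref{123}, combined with the product-normality result Theorem \ref{normality AB BA bar commu THM} and the maximal normality of normal operators recorded in Theorem \ref{thm devinat mortad one condition}. The guiding idea is that $A=BAB^{-1}$ is really a commutation statement which ought to follow from the weak commutation $AB\subset BA$, provided one can upgrade it to an operator equality; and the natural engine for that upgrade is to show $BAB^{-1}$ is normal, so that a normal operator extending the self-adjoint (hence normal) $A$ is forced to coincide with it.

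First I would record the two facts given by Theorem \ref{123}: that $AB\subset BA$ and that $BA$ is normal. The cheap half is the inclusion $A\subset BAB^{-1}$. Since right composition preserves inclusions, from $AB\subset BA$ I obtain $(AB)B^{-1}\subset (BA)B^{-1}=BAB^{-1}$; and because $B^{-1}\in B(H)$ is everywhere defined with $BB^{-1}=I$, the product $(AB)B^{-1}$ is just $A$, its natural domain being exactly $D(A)$ and its action being $x\mapsto Ax$. Hence $A\subset BAB^{-1}$.

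The heart of the argument is the normality of $BAB^{-1}$. For this I would apply Theorem \ref{normality AB BA bar commu THM} to the normal operator $BA$ (normal by Theorem \ref{123}) and the bounded normal operator $B^{-1}$ (self-adjoint and in $B(H)$), which requires checking the hypothesis $B^{-1}(BA)\subset (BA)B^{-1}$. This verification is the domain bookkeeping I expect to be the main obstacle: one computes that $B^{-1}(BA)$ is precisely the restriction of $A$ to $D(BA)=\{x\in D(A):Ax\in D(B)\}$, since for such $x$ one has $Ax\in D(B)$ and therefore $B^{-1}(BAx)=Ax$. As $A|_{D(BA)}\subset A\subset BAB^{-1}=(BA)B^{-1}$, the required inclusion holds, and Theorem \ref{normality AB BA bar commu THM} delivers the normality of $(BA)B^{-1}=BAB^{-1}$.

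Finally, with $A$ self-adjoint (hence normal), $BAB^{-1}$ normal, and $A\subset BAB^{-1}$, the maximal normality of normal operators (Theorem \ref{thm devinat mortad one condition}(3)) forces $A=BAB^{-1}$, which is the claim. The only delicate points are the careful identification of the natural domains, namely that $(AB)B^{-1}=A$ and that $B^{-1}(BA)=A|_{D(BA)}$, and the correct pairing of the bounded factor $B^{-1}$ with the unbounded factor $BA$ when invoking Theorem \ref{normality AB BA bar commu THM}; everything else reduces to a direct appeal to results already established in the excerpt.
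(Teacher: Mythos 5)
Your argument is correct, and every step checks out: the domain computations $(AB)B^{-1}=A$ and $B^{-1}(BA)=A|_{D(BA)}$ are right (the first uses $BB^{-1}=I$ on all of $H$, the second uses $B^{-1}B\subset I$), the hypotheses of Theorem \ref{normality AB BA bar commu THM} are correctly matched with the bounded factor $B^{-1}$ playing the role of the bounded normal operator and $BA$ (normal by Theorem \ref{123}) the unbounded one, and the final appeal to maximal normality is legitimate since $A$ is self-adjoint, hence normal. However, your route differs from the paper's at the decisive step of upgrading $A\subset BAB^{-1}$ to an equality. The paper decomposes $BAB^{-1}$ as the product $B\cdot(AB^{-1})$ of two self-adjoint factors: it first derives $B^{-1}A\subset AB^{-1}$ by multiplying $AB\subset BA$ on both sides by $B^{-1}$, invokes Corollary \ref{Corollary product self-adjoint one bd} to conclude that $AB^{-1}$ is self-adjoint, and then applies the Devinatz--Nussbaum--von Neumann maximality theorem (statement (1) of Theorem \ref{maximality product mains RECALL THM}) to $A\subset B(AB^{-1})$ with all three operators self-adjoint. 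You instead treat $BAB^{-1}$ as a single operator, prove it is normal via Theorem \ref{normality AB BA bar commu THM}, and finish with the elementary fact that normal operators are maximally normal (Theorem \ref{thm devinat mortad one condition}(3)). The paper's version buys a stronger intermediate fact (the self-adjointness of $AB^{-1}$) at the cost of invoking the deep product-maximality theorem directly; yours replaces that final invocation by the much softer maximality statement, at the cost of one extra domain verification. Both are valid, and neither is circular, since Theorem \ref{normality AB BA bar commu THM} is established independently earlier in the paper.
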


\begin{proof}
From Theorem \ref{123}, we have $AB\subset BA$. Left and right
multiplying by $B^{-1}$ give
\[B^{-1}A\subset AB^{-1}.\]
Since $B^{-1}\in B(H)$, Corollary \ref{Corollary product
self-adjoint one bd} yields the self-adjointness of $AB^{-1}$. We
may also write
\[AB\subset BA\Longrightarrow A\subset B(AB^{-1}).\]
Finally, Theorem \ref{maximality product mains RECALL THM} yields
\[A=BAB^{-1},\]
finishing the proof.
\end{proof}

\begin{rema}
In general,
\[BA\subset T\not\Longrightarrow BA=T\]
even when $A$, $B$ and $T$ are all self-adjoint. Indeed, just
consider an invertible self-adjoint $A$ with a domain
$D(A)\subsetneq H$ such that $A^{-1}=B\in B(H)$ and $T=I_H$ (the
identity operator on the whole space $H$). Then
\[BA=A^{-1}A=I_{D(A)}\subsetneq I_H=T\]
where $I_{D(A)}$ is the identity operator on $D(A)$.
\end{rema}

We also have:

\begin{thm}\label{GGHH}
Let $A,B$ and $T$ be operators where $B\in B(H)$. If
 $T^*$ is symmetric, $B$ is self-adjoint and $A$ is
normal, then
\[T\subset AB \Longrightarrow \overline{T}=AB.\]
In particular, if we further assume that $T$ is closed, then we
obtain $T=AB$.
\end{thm}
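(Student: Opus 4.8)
The plan is to prove the single assertion $\overline{T}=AB$, from which the ``closed'' case follows at once, since then $T=\overline{T}$.

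First I would record the easy inclusion. As $A$ is normal it is closed, and $B\in B(H)$, so the product $AB$ is closed; since $T\subset AB$ this already gives $\overline{T}\subset AB$. Thus the whole problem is the reverse inclusion $AB\subset\overline{T}$. To bring in the hypothesis on $T^*$, note that $T$ is densely defined (its adjoint is assumed to exist), so $T\subset AB$ yields $(AB)^*\subset T^*$; and $T^*$ symmetric means $T^*\subset T^{**}=\overline{T}$. Combining these produces the chain
\[(AB)^*\subset T^*\subset\overline{T}\subset AB.\]
In particular it suffices to prove that $AB$ is self-adjoint: then $(AB)^*=AB$ forces every inclusion in the chain to be an equality, whence $\overline{T}=AB$.

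Next I would identify $(AB)^*$ and extract a commutation relation. Because the outer factor of $BA^*$ is the bounded operator $B$, one has $(BA^*)^*=AB$, and taking adjoints gives $(AB)^*=\overline{BA^*}$; in particular $BA^*\subset(AB)^*\subset AB$. Feeding $BA^*\subset AB$ into the Fuglede--Putnam theorem recalled above (with the bounded operator $B$ and the normal operators $A^*,A$) yields $BA\subset A^*B$. Left-multiplying this by $B$ and using $BA^*\subset AB$ then gives $B^2A\subset AB^2$ (and, by Fuglede--Putnam again, $B^2A^*\subset A^*B^2$), i.e.\ the bounded operator $B^2$ commutes strongly with $A$. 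Note here that one does \emph{not} obtain $BA\subset AB$, so the earlier normality theorem (Theorem \ref{normality AB BA bar commu THM}) cannot be invoked directly, the outer factor being $A$ rather than $A^*$. On the other hand, on $D(A)$ the relation $BA^*\subset AB$ becomes the genuine equality $ABu=BA^*u$; since $D(A)\subset D(AB)$ and $(AB)^*u=\overline{BA^*}\,u=BA^*u$ for $u\in D(A)$, the operators $AB$ and $(AB)^*$ coincide on $D(A)$, which is by construction a core for $(AB)^*=\overline{BA^*}$.

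The crux, and the step I expect to be the main obstacle, is to upgrade this to $AB=(AB)^*$, equivalently to show that $D(A)$ is a core for $AB$ (equivalently that $AB$ is symmetric). The inclusion $(AB)^*\subset AB$ is in hand; what remains is $AB\subset(AB)^*$, precisely the assertion that the agreement of $AB$ and $(AB)^*$ on $D(A)$ persists on all of $D(AB)$. Here I would exploit the strong commutation $B^2A\subset AB^2$: writing $P_n=E(\{|\lambda|\le n\})$ for the spectral projections of the normal operator $A$, these commute with $A$, $A^*$ and, since $B^2$ commutes with $E$, with $B^2$; one has $\operatorname{ran}P_n\subset D(A)$ and $B^2x\in D(A)$ for every $x\in D(AB)$. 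The goal is to produce, for each $x\in D(AB)$, vectors $x_n\in D(A)$ with $x_n\to x$ and $ABx_n\to ABx$, so that symmetry on $D(A)$ passes to the closure. The delicate point is controlling $ABP_nx=BA^*P_nx$ as $n\to\infty$, where the spectral cut-offs interact with $B$ only through $B^2$ (the factor $B$ itself intertwines the spectral parts of $A$ at $\lambda$ and $\bar\lambda$ and need not commute with $P_n$). Once $AB$ is shown to be self-adjoint, the chain of the first step collapses to $\overline{T}=AB$, and the closed case gives $T=AB$, completing the proof.
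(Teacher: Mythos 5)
Your set-up is correct and coincides with the paper's: the chain $(AB)^*\subset T^*\subset T^{**}=\overline{T}\subset AB$, the identification $BA^*\subset (AB)^*$, and the Fuglede--Putnam consequence $BA\subset A^*B$ are all exactly the ingredients the authors use. You also reduce the problem correctly: since the chain gives $(AB)^*\subset AB$, it suffices to know that $AB$ is normal (equivalently, by equality of domains, self-adjoint). But at precisely this point your proposal stops: you declare the self-adjointness of $AB$ to be ``the main obstacle,'' sketch an approximation scheme with the spectral projections $P_n$ of $A$, and then concede that the decisive estimate --- controlling $ABP_nx=BA^*P_nx$ as $n\to\infty$ when $B$ itself does not commute with $P_n$ --- is unresolved. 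That is a genuine gap, not a routine verification: without the normality of $AB$ the chain does not collapse and nothing is proved.

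The paper closes this gap algebraically rather than spectrally, by the same device as in Theorem \ref{normality AB BA bar commu THM}. From the two relations $BA^*\subset AB$ and $BA\subset A^*B$ one gets
\[B^2AA^*\subset BA^*BA^*\subset BA^*AB=BA^*\cdot AB\subset (AB)^*AB,\]
and since $AB$ is closed, $(AB)^*AB$ is self-adjoint (von Neumann), so taking adjoints yields $(AB)^*AB\subset AA^*B^2$ with all three factors self-adjoint; the Devinatz--Nussbaum--von Neumann maximality theorem (Theorem \ref{maximality product mains RECALL THM}(1)) then forces $(AB)^*AB=AA^*B^2$, and the symmetric computation gives $AB(AB)^*=AA^*B^2$ as well, hence $AB$ is normal. (The authors then finish slightly differently from you: $(AB)^*$ is normal and $(AB)^*\subset T^*$ with $T^*$ symmetric, so maximal symmetry of normal operators gives $(AB)^*=T^*$ and $AB=\overline{T}$; this is equivalent to your ``collapse the chain'' step.) You were right that Theorem \ref{normality AB BA bar commu THM} cannot be quoted verbatim because the relation in hand is $BA^*\subset AB$ rather than $BA\subset AB$, but the point you missed is that its \emph{proof} --- the sandwich between $B^2AA^*$ and $AA^*B^2$ plus the maximality theorem --- adapts directly, which is what makes the spectral-projection machinery unnecessary.
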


\begin{proof}
Clearly,
\[T\subset AB\Longrightarrow \overline{T}\subset AB.\]
Hence
\[T\subset AB \Longrightarrow BA^*\subset(AB)^*\subset T^*\subset T^{**}=\overline{T}\subset AB.\]
The Fugelde-Putnam Theorem then gives
\[BA\subset A^*B.\]

Reasoning as in the proof of Theorem \ref{normality AB BA bar commu
THM}, we may prove
\[(AB)^*AB=AB(AB)^*~(=AA^*B^2),\]
i.e. $AB$ is normal. Hence $(AB)^*$ too is normal. Since normal
operators are maximally symmetric, we get
\[(AB)^*\subset T^*\Longrightarrow (AB)^*=T^*\Longrightarrow AB=\overline{AB}=(AB)^{**}=T^{**}=\overline{T}.\]
\end{proof}

\begin{cor}
Let $A,B$ and $T$ be operators where $B\in B(H)$. If
 $T$ is symmetric, $B$ is self-adjoint and $A$ is
normal, then
\[BA\subset T\Longrightarrow \overline{T}=\overline{BA}.\]
\end{cor}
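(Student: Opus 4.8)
The plan is to derive this corollary from Theorem \ref{GGHH} by a duality (adjoint) argument, since the present statement is essentially the adjoint counterpart of that theorem: there the inclusion runs $T\subset AB$, while here it runs $BA\subset T$, and passing to adjoints should interchange the two.

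First I would take adjoints of the hypothesis $BA\subset T$. Since $T$ is symmetric it is densely defined, and $BA$ is densely defined because $D(BA)=D(A)$ (as $B\in B(H)$) and $A$, being normal, is densely defined. Hence $T^*\subset(BA)^*$, and because $B\in B(H)$ the adjoint of the product splits as $(BA)^*=A^*B^*=A^*B$ (using $B^*=B$). This gives $T^*\subset A^*B$.

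Next I would invoke Theorem \ref{GGHH} with the operators $T^*$, $A^*$, $B$ playing the roles of its $T$, $A$, $B$. The required hypotheses are met: $A^*$ is normal since $A$ is normal; $B$ is self-adjoint; and the adjoint of the new operator, namely $(T^*)^*=\overline{T}$, is symmetric because the closure of a symmetric operator is symmetric. Matching this last condition is the one point demanding care, as Theorem \ref{GGHH} imposes its symmetry assumption on the \emph{adjoint} of its operator rather than on the operator itself; that is precisely what the substitution $T\mapsto T^*$ supplies. Combined with the inclusion $T^*\subset A^*B$ from the previous step, Theorem \ref{GGHH} yields $\overline{T^*}=A^*B$, and since $T^*$ is already closed this reads $T^*=A^*B$.

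Finally I would take adjoints once more. From $T^*=A^*B$ we obtain $\overline{T}=T^{**}=(A^*B)^*$. On the other hand, the equality $(BA)^*=A^*B$ established above gives, after one more adjoint, $(A^*B)^*=(BA)^{**}=\overline{BA}$; here the passage $(BA)^{**}=\overline{BA}$ is legitimate because $A^*B=T^*$ is densely defined (as $T$ is symmetric), so $BA$ is closable. Comparing the two expressions for $(A^*B)^*$ yields $\overline{T}=\overline{BA}$, as claimed. I do not anticipate any serious obstacle beyond the careful bookkeeping of adjoints and the verification that the closedness facts for products with the bounded operator $B$ remain in force throughout.
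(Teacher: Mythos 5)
Your proof is correct, but it is organized differently from the paper's. The paper argues directly: it reruns the Fuglede--Putnam computation from the proof of Theorem \ref{GGHH} to show that $\overline{BA}$ is normal (via $\overline{BA}(BA)^*=(BA)^*\overline{BA}$), and then concludes from $\overline{BA}\subset\overline{T}$, the symmetry of $\overline{T}$, and the fact that normal operators are maximally symmetric. You instead dualize: passing to adjoints turns $BA\subset T$ into $T^*\subset (BA)^*=A^*B$, and the substitution $(T,A,B)\mapsto(T^*,A^*,B)$ lands exactly in the hypotheses of Theorem \ref{GGHH} --- in particular the slightly odd-looking assumption there that the \emph{adjoint} of the operator be symmetric is precisely what $T^{**}=\overline{T}$ supplies --- giving $T^*=A^*B$ and hence $\overline{T}=T^{**}=(A^*B)^*=(BA)^{**}=\overline{BA}$. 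Your bookkeeping is sound: $(BA)^*=A^*B^*$ because the bounded factor sits on the left, $T^*$ is densely defined and closed since $T$ is symmetric, and $BA$ is closable because $A^*B\supset T^*$ is densely defined. What your route buys is economy --- no repetition of the normality argument, just a black-box application of Theorem \ref{GGHH}; what the paper's route buys is the explicit normality of $\overline{BA}$ as a by-product, which it records in the displayed identity. The two are otherwise equivalent in strength, since $\overline{T}=\overline{BA}$ and $T^*=(BA)^*$ carry the same information here.
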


\begin{proof}As above, we get
\[\overline{BA}(BA)^*=(BA)^*\overline{BA}~(=A^*AB^2).\]
Since normal operators are maximally symmetric, we obtain
\[BA\subset T\Longrightarrow \overline{BA}\subset \overline{T} \Longrightarrow \overline{T}=\overline{BA},\]
as needed.
\end{proof}

From the proof of Theorem \ref{GGHH}, we have:

\begin{cor}
Let $A,B$ and $T$ be operators where $B\in B(H)$. If
 $T$ is symmetric, $B$ is self-adjoint and $A$ is
normal, then
\[T\subset AB \Longrightarrow \overline{BA}=A^*B.\]
\end{cor}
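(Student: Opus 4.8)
The plan is to extract from the proof of Theorem~\ref{GGHH} the two facts that do the real work, and then to close the argument by a maximality principle. Recall that in that proof, starting from $T\subset AB$ and using that $A$ is normal and $B\in B(H)$ is self-adjoint, one obtains both the inclusion $BA^*\subset AB$ and, via the Fuglede--Putnam Theorem, the commutation relation
\[BA\subset A^*B,\]
and, reasoning as in Theorem~\ref{normality AB BA bar commu THM}, that $AB$ is normal. The key observation I would make is that the computation proving the normality of $AB$ rests only on the two relations $BA^*\subset AB$ and $BA\subset A^*B$, which are interchanged under the substitution $A\leftrightarrow A^*$ (legitimate since $A^*$ is again normal). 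Hence the very same computation yields that $A^*B$ is normal as well.

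Next I would record the relevant adjoint identity. Since $B\in B(H)$ is the left factor in the product $BA$, the product rule gives the exact equality $(BA)^*=A^*B^*=A^*B$. As $A^*B$ is normal it is densely defined, so $(BA)^*$ is densely defined, $BA$ is closable, and
\[\overline{BA}=(BA)^{**}=(A^*B)^*.\]
Being the adjoint of the normal operator $A^*B$, the operator $\overline{BA}$ is therefore itself normal. Moreover, since $A^*B$ is normal it is in particular closed, so the inclusion $BA\subset A^*B$ passes to the closure, giving $\overline{BA}\subset A^*B$.

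Finally, I have two normal operators $\overline{BA}$ and $A^*B$ with $\overline{BA}\subset A^*B$; because normal operators are maximally normal (Theorem~\ref{thm devinat mortad one condition}(3)), this inclusion must be an equality, that is $\overline{BA}=A^*B$, as desired. The step I expect to be the main obstacle is the transfer of normality from $AB$ to $A^*B$: one must verify that the norm/commutation computation underlying the normality of $AB$ is genuinely symmetric in $A$ and $A^*$, and keep careful track of which factor is bounded when forming adjoints, so that $(BA)^*=A^*B$ holds with equality rather than mere inclusion. Once these adjoint identities are pinned down, the maximality principle does the rest.
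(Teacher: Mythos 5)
Your proposal is correct and follows essentially the same route as the paper: the paper likewise extracts the two inclusions $BA\subset A^*B$ and $BA^*\subset AB$ from the proof of Theorem \ref{GGHH}, uses the argument of Theorem \ref{normality AB BA bar commu THM} to get normality of both $A^*B$ and $\overline{BA}$ (the latter via $\overline{BA}=(A^*B)^*$, just as you do), and concludes by maximal normality of normal operators. Your explicit remark that the two inclusions are interchanged under $A\leftrightarrow A^*$ is a nice way of justifying the step the paper leaves as ``cf.''
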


\begin{proof}We have already obtained:
\[BA\subset A^*B \text{ and } BA^*\subset AB.\]
These two inequalities allow us to establish the normality of both
$\overline{BA}$ and $A^*B$ (cf. Theorem \ref{normality AB BA bar
commu THM}). Therefore,
\[\overline{BA}=A^*B\]
\end{proof}

\begin{cor}Let $A,B,T$ be non necessarily bounded operators such that $A$ is self-adjoint,
$B$ is symmetric with $B^{-1}\in B(H)$  and $T$ is normal. Then:
\[AB\subset T\Longrightarrow A=TB^{-1}.\]
\end{cor}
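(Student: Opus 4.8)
The plan is to reduce the statement to Theorem \ref{GGHH} after a harmless relabelling of the three operators, so that $B^{-1}$ (not $B$) plays the role of the bounded self-adjoint factor. First I would record the standing facts about $B$: a symmetric operator with $B^{-1}\in B(H)$ is automatically self-adjoint, and then $B^{-1}$ is itself a bounded self-adjoint (in particular normal) operator. This is exactly the kind of ``bounded factor'' that Theorem \ref{GGHH} requires.

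The key preliminary step is to pass from the inclusion $AB\subset T$ to an inclusion in which $A$ stands alone on the left. Right-multiplying $AB\subset T$ by the everywhere-defined bounded operator $B^{-1}$ preserves the inclusion, so $(AB)B^{-1}\subset TB^{-1}$. A short domain computation then shows $(AB)B^{-1}=A$: indeed $x\in D((AB)B^{-1})$ forces $B^{-1}x\in D(AB)$, i.e. $B(B^{-1}x)=x\in D(A)$, and conversely, so that $D((AB)B^{-1})=D(A)$ and $(AB)B^{-1}x=A(BB^{-1}x)=Ax$. Hence
\[A\subset TB^{-1}.\]

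Now I would apply Theorem \ref{GGHH} under the substitution $(T,A,B)\mapsto (A,T,B^{-1})$. All the hypotheses are met: the bounded operator is $B^{-1}\in B(H)$, the normal factor is $T$, the self-adjoint factor is $B^{-1}$, and the requirement that ``$T^*$ be symmetric'' becomes the requirement that $A^*=A$ be symmetric, which holds because $A$ is self-adjoint; the inclusion demanded by the theorem, $A\subset TB^{-1}$, is precisely what was just established. Theorem \ref{GGHH} therefore yields $\overline{A}=TB^{-1}$, and since $A$ is self-adjoint, hence closed, one has $\overline{A}=A$, so that $A=TB^{-1}$, as desired.

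I expect the only genuine content to be the identity $(AB)B^{-1}=A$ together with the observation that the roles of the operators may be permuted so that $B^{-1}$, rather than $B$, occupies the slot of the bounded self-adjoint factor; once this relabelling is spotted, no Fuglede--Putnam computation needs to be repeated, since it is already packaged inside Theorem \ref{GGHH}. The main point to handle with care is the adjoint/closure bookkeeping at the end: the conclusion $\overline{A}=TB^{-1}$ furnished by Theorem \ref{GGHH} collapses to the sought equality $A=TB^{-1}$ only because $A$ is already closed.
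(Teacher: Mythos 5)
Your proof is correct and follows essentially the same route as the paper: both arguments first pass from $AB\subset T$ to $A\subset TB^{-1}$ and then conclude by establishing the normality of $TB^{-1}$ (via Fuglede--Putnam) and invoking maximal normality. The only difference is one of packaging --- you obtain the normality/maximality step by citing Theorem \ref{GGHH} under the relabelling $(T,A,B)\mapsto(A,T,B^{-1})$, whereas the paper reruns that argument inline --- and your supporting checks, namely $(AB)B^{-1}=A$ and $\overline{A}=A$, are sound.
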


\begin{proof}Obviously,
\[AB\subset T\Longrightarrow A\subset TB^{-1}\Longrightarrow B^{-1}T^*\subset A\subset TB^{-1}\Longrightarrow B^{-1}T\subset T^*B^{-1}\]
where we used the Fuglede-Putnam Theorem in the lase implication. As
in the preceding corollary, we may show the normality of $TB^{-1}$.
This, combined with the self-adjointness of $A$ and $A\subset
TB^{-1}$ lead finally to $A=TB^{-1}$, as needed.
\end{proof}

\begin{rema}
We already observed in the remark just above Theorem \ref{GGHH} that
if $A$, $B$ and $T$ are as in the previous corollary, then we must
not have $T=AB$. The same counterexample may be reused here.
\end{rema}

The following is also worth stating.

\begin{cor}\label{456} Let $A,B,T$ be operators such that $A$ is normal, $B$ is
bounded and self-adjoint and $T$ is self-adjoint. Then
\[T\subset AB\Longrightarrow T=AB.\]
\end{cor}

\begin{proof}
As in the proofs above, we can easily show that $AB$ is normal. Then
Theorem \ref{thm devinat mortad one condition} does the remaining
job.
\end{proof}

\begin{thm}
Let $A,B$ and $T$ be non-necessarily bounded operators. Assume that
$B$ is normal, that $A$ is symmetric and invertible (hence $A$ is
self-adjoint) and that $T$ is self-adjoint. Then
\[T\subset AB\Longrightarrow T=AB.\]
\end{thm}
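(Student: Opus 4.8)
The plan is to deduce the equality from the maximality of normal operators: once we know that \(AB\) is normal, then, since a self-adjoint \(T\) is in particular normal and \(T\subset AB\), Theorem \ref{thm devinat mortad one condition}(3) (normal operators are maximally normal) immediately yields \(T=AB\). So the whole problem reduces to establishing the normality of \(AB\). Two preliminary observations come for free: because \(A\) is invertible we have \(A^{-1}\in B(H)\), and since \(B\) is normal (hence closed), the product \(AB\) is closed (recall that \(ST\) is closed when \(S^{-1}\in B(H)\) and \(T\) is closed); moreover \(AB\) is densely defined because \(T\subset AB\) and \(T\) is densely defined.

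Next I would extract the commutation relations. As \(T\subset AB\) and both are densely defined, passing to adjoints gives \((AB)^*\subset T^*=T\subset AB\). Combining this with the general inclusion \(B^*A^*\subset (AB)^*\) and with \(A^*=A\) produces \(B^*A\subset AB\). Since the only bounded operator in sight is \(A^{-1}\), I would then conjugate: multiplying \(B^*A\subset AB\) on both sides by \(A^{-1}\) and using \(AA^{-1}=I\) and \(A^{-1}A\subset I\) yields \(A^{-1}B^*\subset BA^{-1}\). Now the Fuglede--Putnam Theorem applies, with the bounded factor \(A^{-1}\) and the normal operators \(B^*\) and \(B\), and delivers the companion relation \(A^{-1}B\subset B^*A^{-1}\).

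The crux is to turn these twisted relations into the normality of \(AB\). The relations \(A^{-1}B^*\subset BA^{-1}\) and \(A^{-1}B\subset B^*A^{-1}\) are exactly the ones that, in the proofs of Theorem \ref{normality AB BA bar commu THM} and Theorem \ref{GGHH}, drive the computation showing that the product of a normal operator with a bounded self-adjoint operator is normal; here that bounded self-adjoint operator is \(A^{-1}\). Running that computation (every square that occurs is a square of the bounded \(A^{-1}\), so everything stays controlled, and the inclusions are promoted to equalities by maximality as in those proofs) shows that \(A^{-1}B\) is normal, indeed that \(A^{-1}B=B^*A^{-1}\) is self-adjoint. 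The remaining, and in my view hardest, step is to transfer this from \(A^{-1}B\) back to \(AB\): conjugating by the unbounded \(A\) destroys domain control, so one cannot simply write \(AB=A(A^{-1}B)A\). Instead I would argue directly that \(AB\) is normal, exploiting the established identity \(A^{-1}B=B^*A^{-1}\) together with \((AB)^*\subset AB\); once \(AB\) is known to be normal, its adjoint has the same domain, so \((AB)^*\subset AB\) forces \((AB)^*=AB\), i.e. \(AB\) is self-adjoint, whereupon the maximality argument of the first paragraph closes the proof. The main obstacle throughout is precisely this normality step, caused by the fact that neither \(A\) nor \(B\) is bounded, only \(A^{-1}\).
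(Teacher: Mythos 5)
Your reduction of the theorem to the normality of $AB$ is exactly the paper's strategy, and your chain of commutation relations coincides with the paper's: from $T\subset AB$ you correctly obtain $B^*A\subset T\subset AB$, conjugate by the bounded $A^{-1}$ to get $A^{-1}B^*\subset BA^{-1}$, and apply the Fuglede--Putnam Theorem to get $A^{-1}B\subset B^*A^{-1}$. Up to that point the two arguments are identical and your steps are sound.

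The genuine gap is that you never prove that $AB$ is normal: you call this ``the main obstacle'', and your sentence ``I would argue directly that $AB$ is normal \ldots once $AB$ is known to be normal \ldots'' is circular. Your proposed detour via the normality of $A^{-1}B$ does not help, for precisely the reason you yourself point out --- one cannot conjugate back by the unbounded $A$ to pass from $A^{-1}B$ to $AB$ --- so the argument stalls exactly where the real work begins. The paper closes this step concretely: it multiplies $A^{-1}B\subset B^*A^{-1}$ on the left and right by $A$ (legitimate because $AA^{-1}=I$ and $A^{-1}A\subset I$ preserve the inclusion in the needed direction) to obtain $BA\subset AB^*$, and then combines $BA\subset AB^*$ with $B^*A\subset AB$ to get $(AB)^*AB\supset B^*BA^2$, i.e.\ $(AB)^*AB\subset A^2B^*B$. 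Since $AB$ is closed and densely defined, $(AB)^*AB$ is self-adjoint, so Theorem \ref{maximality product mains RECALL THM} upgrades this inclusion to the equality $(AB)^*AB=A^2B^*B$; the analogous computation gives $AB(AB)^*=A^2B^*B$, whence $AB$ is normal, and maximal normality of self-adjoint operators (Theorem \ref{thm devinat mortad one condition}) finishes the proof. Without carrying out some such explicit computation with $(AB)^*AB$ and $AB(AB)^*$, your proposal does not establish the theorem.
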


\begin{proof}
We claim that $AB$ is normal. First we have:
\begin{align*}
T\subset AB &\Longrightarrow B^*A\subset T\subset AB\\
&\Longrightarrow A^{-1}B^*AA^{-1}\subset A^{-1}ABA^{-1}\\
&\Longrightarrow A^{-1}B^*\subset BA^{-1}\\
&\Longrightarrow A^{-1}B\subset B^*A^{-1}\text{ (by Fuglede-Putnam Theorem)}\\
&\Longrightarrow BA\subset AB^*.
\end{align*}

Hence
\[(AB)^*AB\supset B^*BA^2 \text{ or } (AB)^*AB\subset A^2B^*B\]
as $(AB)^*AB$ is self-adjoint since $AB$ is closed because also
$A^{-1}\in B(H)$ . Therefore,
\[(AB)^*AB= A^2B^*B\]
by Theorem \ref{maximality product mains RECALL THM}. Similarly, we
may prove that
\[AB(AB)^*=A^2B^*B.\]
Accordingly, $AB$ is normal. In the end, since self-adjoint
operators are maximally normal, we obtain
\[T\subset AB\Longrightarrow T=AB,\]
as required.
\end{proof}

\section{A Conjecture}

Unfortunately, if we switch the roles of $A$ and $B$ in Corollary
\ref{456}, then we have not been able so far to find a complete
answer. Indeed, we need a version of Fuglede-Putnam Theorem which is
not available in the literature yet. Even with help from Bent
Fuglede himself, we have only got as far as the following (we have
chosen not to include the proof in this paper):

\begin{thm}\label{Fuglede NEW Theorem 2017}
Let $B$ be a bounded normal operator with a (finite) pure point
spectrum and let $A$ be a closed (possibly unbounded) operator on a
separable complex Hilbert space $H$. Let $f,g:\C\to \C$ be two
continuous functions. Then
\[BA\subset Af(B)\Longrightarrow g(B)A\subset A(g\circ f)(B).\]
\end{thm}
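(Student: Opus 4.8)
The plan is to exploit the finiteness of the spectrum to reduce the problem to polynomials, where the intertwining relation iterates cleanly, and then to recover the continuous functions $g$ and $g\circ f$ by polynomial interpolation. Write $\sigma(B)=\{\lambda_1,\dots,\lambda_n\}$ for the (finite) point spectrum and let $P_1,\dots,P_n$ be the associated orthogonal spectral projections, so that $B=\sum_{k=1}^n\lambda_k P_k$ and, by the functional calculus for a normal operator with finite pure point spectrum, $h(B)=\sum_{k=1}^n h(\lambda_k)P_k$ for every continuous $h:\C\to\C$. Since $B\in B(H)$ we have $D(BA)=D(A)$, so the hypothesis $BA\subset Af(B)$ reads: for every $x\in D(A)$ one has $f(B)x\in D(A)$ and $BAx=Af(B)x$.

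First I would iterate this relation. Observe that $x\in D(A)$ forces $f(B)x\in D(A)$, hence $f(B)^2x=f(B)(f(B)x)\in D(A)$, and by induction $f(B)^m x\in D(A)$ for every $m\geq 0$; applying the hypothesis repeatedly (and using that $B$ is everywhere defined) yields
\[
B^m Ax=A f(B)^m x\qquad(x\in D(A),\ m\geq 0).
\]
By linearity of $A$ on the subspace $D(A)$, for any polynomial $p$ we then get $p(B)Ax=A\,p(f(B))x$, and since $p(f(B))=\sum_k p(f(\lambda_k))P_k=(p\circ f)(B)$ this is exactly
\[
p(B)A\subset A(p\circ f)(B),
\]
the domain inclusion being automatic because each $f(B)^m x$, and hence the finite combination $(p\circ f)(B)x=p(f(B))x$, lies in $D(A)$.

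It remains to pass from polynomials to the prescribed continuous function $g$. The key observation is that one must control $p$ simultaneously at the eigenvalues and at their images under $f$: set
\[
S=\{\lambda_1,\dots,\lambda_n\}\cup\{f(\lambda_1),\dots,f(\lambda_n)\},
\]
a finite subset of $\C$, and use Lagrange interpolation to produce a polynomial $p$ with $p=g$ on $S$ (this is consistent even when points of $S$ coincide, since $g$ is single-valued). Then $p(\lambda_k)=g(\lambda_k)$ gives $p(B)=g(B)$, while $p(f(\lambda_k))=g(f(\lambda_k))$ gives $(p\circ f)(B)=\sum_k g(f(\lambda_k))P_k=(g\circ f)(B)$. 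Substituting these two identities into $p(B)A\subset A(p\circ f)(B)$ delivers $g(B)A\subset A(g\circ f)(B)$, as desired.

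The main obstacle, and the whole point of the hypotheses, is this double interpolation requirement: a polynomial matching $g$ only on $\sigma(B)$ reproduces $g(B)$ but not necessarily $(g\circ f)(B)$, so one genuinely needs $p=g$ on the enlarged set $S$, which is available precisely because $\sigma(B)$ is finite. I expect the finiteness of the spectrum to be indispensable to this argument; note that neither the closedness of $A$ nor the separability of $H$ is actually needed in this finite-spectrum case, these hypotheses presumably being relics of the more general Fuglede--Putnam framework the authors had in mind.
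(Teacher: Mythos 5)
The paper deliberately omits a proof of this theorem (it states that the result was communicated by Bent Fuglede and that the authors ``have chosen not to include the proof''), so there is nothing in the source to compare your argument against; I can only assess it on its own merits, and it holds up. The two key steps are both sound: (i) the iteration $B^mAx=Af(B)^mx$ is a correct induction, since $BA\subset Af(B)$ with $B$ everywhere defined says precisely that $x\in D(A)$ implies $f(B)x\in D(A)$ and $BAx=Af(B)x$, which can then be applied to $y=f(B)^{m}x\in D(A)$; and (ii) the passage from polynomials to $g$ correctly identifies the one genuine pitfall, namely that the interpolating polynomial must agree with $g$ on $\sigma(B)\cup f(\sigma(B))$ and not merely on $\sigma(B)$, which is exactly where the finiteness of the spectrum is used. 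The identifications $p(B)=\sum_k p(\lambda_k)P_k$ and $p(f(B))=(p\circ f)(B)$ are valid for the spectral resolution $B=\sum_k\lambda_kP_k$ of a normal operator with finite pure point spectrum, and the domain bookkeeping ($f(B)^mx\in D(A)$ for all $m$, hence $(p\circ f)(B)x\in D(A)$ by linearity of $D(A)$) is handled correctly. Your closing remark is also accurate: neither the closedness of $A$ nor the separability of $H$ enters this argument, so in the finite-spectrum case those hypotheses are indeed superfluous.
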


\begin{cor}\label{Fugelde partial coro}With $A$ and $B$ as above, we have
\[BA\subset AB^*\Longrightarrow B^*A\subset AB.\]

\end{cor}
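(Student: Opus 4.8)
The plan is to deduce this corollary directly from Theorem \ref{Fuglede NEW Theorem 2017} by choosing the two continuous functions $f,g\colon\C\to\C$ appropriately. The guiding observation is that for a normal operator the adjoint is itself a function of the operator: with $f(z)=\overline{z}$ one has $f(B)=B^*$. This lets me rewrite both the hypothesis $BA\subset AB^*$ and the desired conclusion $B^*A\subset AB$ in the exact form appearing in the theorem.

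First I would record the functional-calculus identity $f(B)=B^*$ for $f(z)=\overline{z}$. Since $B$ is normal with finite pure point spectrum, it decomposes as $B=\sum_j \lambda_j P_j$, where the $\lambda_j$ are the eigenvalues and the $P_j$ are the mutually orthogonal spectral projections onto the corresponding eigenspaces, summing to the identity. Then $f(B)=\sum_j \overline{\lambda_j}P_j=B^*$, and more generally $h(B)=\sum_j h(\lambda_j)P_j$ for any continuous $h$. With this in hand, the hypothesis $BA\subset AB^*$ reads precisely as $BA\subset A f(B)$, which is the left-hand side of the implication in Theorem \ref{Fuglede NEW Theorem 2017}.

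Next I would apply that theorem with $g(z)=\overline{z}$, which is again continuous. The theorem then yields $g(B)A\subset A(g\circ f)(B)$. Here $g(B)=B^*$ by the same computation as above, while $(g\circ f)(z)=\overline{\overline{z}}=z$, so $(g\circ f)(B)=B$. Substituting these gives $B^*A\subset AB$, which is exactly the asserted conclusion.

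I do not expect a genuine obstacle: the corollary is a clean specialization of the theorem, and the only points to verify are that $f$ and $g$ are continuous (immediate, since $z\mapsto\overline{z}$ is continuous on $\C$) and that the composition is taken in the right order, so that $(g\circ f)(z)=z$ produces the identity, i.e. $B$, on the right. The mild subtlety worth stating explicitly is the identification $f(B)=B^*$, which is where the normality of $B$ (ensuring its functional calculus is available) enters essentially.
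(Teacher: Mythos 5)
Your proof is correct and matches the paper's own argument exactly: the paper likewise applies Theorem \ref{Fuglede NEW Theorem 2017} with $f,g:z\mapsto\overline{z}$, so that $g\circ f$ is the identity. Your additional verification that $f(B)=B^*$ via the spectral decomposition of the normal operator $B$ is a sensible elaboration of a step the paper leaves implicit.
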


\begin{proof}
Just apply Theorem \ref{Fuglede NEW Theorem 2017} to the functions
$f,g:z\mapsto \overline{z}$ (so that $g\circ f$ becomes the identity
map on $\C$).
\end{proof}

\begin{cor}With $A$ and $B$ as above, we have
\[T\subset AB\Longrightarrow T=AB\]
if we also suppose that $A$ and $T$ are self-adjoint.
\end{cor}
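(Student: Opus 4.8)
The plan is to reduce everything to the normality of $AB$ and then invoke maximality, exactly as in the proof of Corollary \ref{456}; the genuinely new ingredient will be the ``reversed'' Fuglede relation supplied by Corollary \ref{Fugelde partial coro}.

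First I would record the two structural facts that come for free. Since $T$ is self-adjoint it is densely defined, whence $D(T)\subset D(AB)$ forces $AB$ to be densely defined; moreover $AB$ is closed because $A$ is closed and $B\in B(H)$. Taking adjoints in $T\subset AB$ and using $B^*A^*\subset(AB)^*$ together with $A=A^*$ and $T=T^*$ yields
\[B^*A\subset (AB)^*\subset T^*=T\subset AB,\]
so that $B^*A\subset AB$. This is only \emph{one} of the two intertwining relations one needs, and it is the one \emph{not} accessible through the classical Fuglede--Putnam theorem, since $B^*$ sits on the left and $B$ on the right. Here is where I would apply Corollary \ref{Fugelde partial coro} with $B$ replaced by $B^*$ (which is again bounded, normal and of finite pure point spectrum): it turns $B^*A\subset A(B^*)^*=AB$ into $(B^*)^*A=BA\subset AB^*$. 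Thus I would arrive at the pair
\[B^*A\subset AB\qquad\text{and}\qquad BA\subset AB^*.\]

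With both inclusions in hand I would prove that $AB$ is normal by the Gram-operator computation used in Theorems \ref{normality AB BA bar commu THM} and \ref{GGHH}. Writing $N:=B^*B=BB^*$ (bounded, positive and self-adjoint, by normality of $B$) and using $A=A^*$, I would build the chain
\[NA^2\subset (B^*A)(B^*A)\subset (B^*A)(AB)\subset (AB)^*(AB),\]
where the first step uses $BA\subset AB^*$, the second uses $B^*A\subset AB$, and the last uses $B^*A\subset(AB)^*$. Since $AB$ is closed, $(AB)^*AB$ is self-adjoint, so taking adjoints flips $NA^2\subset (AB)^*AB$ into $(AB)^*AB\subset (NA^2)^*=A^2N$; as $A^2$ and $N$ are both self-adjoint, Theorem \ref{maximality product mains RECALL THM}(1) upgrades this to $(AB)^*AB=A^2N$. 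The mirror chain
\[NA^2\subset (B^*A)(B^*A)\subset (AB)(B^*A)\subset (AB)(AB)^*\]
produces, in the same way, $AB(AB)^*=A^2N$. Hence $(AB)^*AB=AB(AB)^*$, i.e. $AB$ is normal.

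Finally, since $T$ is self-adjoint it is in particular normal, and $T\subset AB$ with $AB$ normal; because normal operators are maximally normal (Theorem \ref{thm devinat mortad one condition}(3)), I would conclude $T=AB$. I expect the main obstacle to lie entirely in the second paragraph, namely producing the relation $BA\subset AB^*$: everything downstream is the now-standard Gram-operator bookkeeping, whereas this relation is precisely what the ordinary Fuglede--Putnam theorem cannot give, and it is the reason Theorem \ref{Fuglede NEW Theorem 2017} (with its restriction that $B$ have a finite pure point spectrum) must be invoked—removing that spectral hypothesis being exactly what the conjecture section leaves open.
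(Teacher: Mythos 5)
Your proof is correct and takes essentially the same route the paper intends: the paper's own proof is just the one-line pointer ``Apply Corollary \ref{Fugelde partial coro}...'', and you have filled in exactly the intended details --- deriving $B^*A\subset AB$ from $T\subset AB$ by taking adjoints, invoking Corollary \ref{Fugelde partial coro} (legitimately applied with $B$ replaced by $B^*$, which still satisfies its hypotheses) to obtain the missing relation $BA\subset AB^*$, running the Gram-operator computation of Theorems \ref{normality AB BA bar commu THM} and \ref{GGHH} to get normality of $AB$, and finishing by maximal normality as in Corollary \ref{456}.
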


\begin{proof}
Apply Corollary \ref{Fugelde partial coro}...
\end{proof}

Related to what has just been discussed, we propose the following
conjecture:

\begin{conj}
Let $A$ be an operator (densely defined and closed if necessary) and
let $B\in B(H)$ be normal. Then
\[BA\subset AB^* \Longrightarrow B^*A\subset AB.\]
\end{conj}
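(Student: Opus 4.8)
The plan is to attack the conjecture through the spectral calculus of the bounded normal operator $B$, reducing the desired conclusion to an intertwining relation between the spectral projections of $B$ and those of $B^*$. Write $E$ for the spectral measure of $B$ on the Borel subsets of $\sigma(B)\subset\C$, so that $B=\int z\,dE(z)$ and $B^*=\int\overline{z}\,dE(z)$. The first step is purely algebraic. From $BA\subset AB^*$ one checks that $D(A)$ is invariant under $B^*$ and that, for each $x\in D(A)$,
\[B^2Ax=B(BAx)=B(AB^*x)=A(B^*)^2x,\]
where the last equality applies the hypothesis to $B^*x\in D(A)$. An easy induction then yields $B^nA\subset A(B^*)^n$ for all $n\geq 0$, and hence $p(B)A\subset Ap(B^*)$ for every one-variable polynomial $p$, where $p(B^*)=\int p(\overline{z})\,dE(z)$.

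The second step upgrades this from polynomials to bounded Borel functions. For a bounded Borel $\phi$ put $\phi_c(z)=\phi(\overline{z})$, so that $\phi_c(B)=\int\phi(\overline{z})\,dE(z)=\phi(B^*)$, and set
\[\mathcal{F}=\{\phi\text{ bounded Borel}:~\phi(B)A\subset A\phi_c(B)\}.\]
By Step~1, $\mathcal{F}$ contains every polynomial in $z$. Moreover $\mathcal{F}$ is closed under uniformly bounded pointwise convergence: if $\phi_n\to\phi$ pointwise $E$-a.e.\ with $\sup_n\|\phi_n\|_\infty<\infty$, then $\phi_n(B)\to\phi(B)$ and $\phi_n(B^*)=(\phi_n)_c(B)\to\phi_c(B)$ strongly, so for $x\in D(A)$ we have $(\phi_n)_c(B)x\to\phi_c(B)x$ while $A(\phi_n)_c(B)x=\phi_n(B)Ax\to\phi(B)Ax$; since $A$ is closed, $\phi_c(B)x\in D(A)$ and $A\phi_c(B)x=\phi(B)Ax$, i.e.\ $\phi\in\mathcal{F}$. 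If I can show $\chi_\omega\in\mathcal{F}$ for every Borel $\omega$, then, using $\chi_\omega(\overline{z})=\chi_{\overline{\omega}}(z)$ with $\overline{\omega}=\{\overline{z}:z\in\omega\}$, I obtain $E(\omega)A\subset AE(\overline{\omega})$ for all $\omega$. Integrating this against $\overline{z}$ and invoking the closedness of $A$ to pass $A$ through the spectral integral gives, for $x\in D(A)$,
\[B^*Ax=\int\overline{z}\,d\big(E(z)Ax\big)=\int\overline{z}\,A\,d\big(E(\overline{z})x\big)=A\int z\,dE(z)x=ABx,\]
that is $B^*A\subset AB$, which is exactly the conclusion sought.

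The heart of the matter — and the step I expect to be the genuine obstacle — is therefore showing that the indicators $\chi_\omega$ lie in $\mathcal{F}$, equivalently that the single function $z\mapsto\overline{z}$ belongs to $\mathcal{F}$ (once $\overline{z}\in\mathcal{F}$, Stone--Weierstrass makes polynomials in $z,\overline{z}$ uniformly dense in $C(\sigma(B))$, and a bounded pointwise argument reaches all Borel functions). Concretely, one must approximate $\overline{z}$ pointwise $E$-almost everywhere, and boundedly, by polynomials in the \emph{single} variable $z$. This is harmless when $\sigma(B)$ is finite, where Lagrange interpolation makes $\overline{z}$ literally a polynomial in $z$ on $\sigma(B)$ — precisely how the finite pure point spectrum case (Corollary \ref{Fugelde partial coro}) goes through — and also when $\sigma(B)\subset\R$. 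It breaks down, however, as soon as $\sigma(B)$ has nonempty interior: by Montel's theorem a uniformly bounded sequence of polynomials converging pointwise on an open subset of $\sigma(B)$ converges to a holomorphic limit there, so $\overline{z}$ can never be reached in this way. Overcoming this would require a genuinely new, Fuglede--Putnam-type propagation mechanism transporting $BA\subset AB^*$ to $B^*A\subset AB$ without passing through holomorphic approximation — for instance a Rosenblum-style integral transform adapted to the conjugation twist, or a two-variable functional calculus simultaneously controlling the action of $A$ on the spectral subspaces of $B$ and of $B^*$. It is exactly the absence of such a tool that forces us to leave the statement as a conjecture.
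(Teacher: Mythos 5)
The statement you were asked to prove is posed in the paper as a \emph{conjecture}: the authors give no proof of it, and the surrounding material (Theorem \ref{Fuglede NEW Theorem 2017} and Corollary \ref{Fugelde partial coro}, obtained with Fuglede's assistance) settles only the case where $B$ has finite pure point spectrum. Your proposal, by its own admission, does not prove the conjecture either, so there is no proof to compare it against; what you have written is a (largely correct) diagnosis of why the one-variable functional-calculus route fails. Your Step~1 is sound: $BA\subset AB^*$ forces $B^*$-invariance of $D(A)$, the induction gives $B^nA\subset A(B^*)^n$, hence $p(B)A\subset Ap(B^*)$ for every polynomial $p$ in $z$. The closure of your class $\mathcal{F}$ under uniformly bounded pointwise limits via the closedness of $A$ is also fine, with one caveat: you should demand everywhere-pointwise convergence (or check that the exceptional $E$-null set remains $E$-null after conjugation), since the spectral measure $E$ and its reflection $\omega\mapsto E(\overline{\omega})$ need not be mutually absolutely continuous.

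Your identification of the obstruction is exactly the right one and matches the paper's own framing. When $\sigma(B)$ is finite, $\overline{z}$ agrees with a polynomial in $z$ on $\sigma(B)$ by Lagrange interpolation, and your scheme then delivers $B^*A\subset AB$ --- this is in substance how Corollary \ref{Fugelde partial coro} works. When $\sigma(B)$ is real the issue disappears because $\overline{z}=z$ there. But as soon as $\sigma(B)$ supports no bounded pointwise approximation of $\overline{z}$ by polynomials in $z$ (e.g.\ when it has interior, by Montel--Vitali), the one-variable calculus cannot reach $B^*$, and the classical Fuglede--Putnam mechanism (which for bounded $A$ exploits the entire function $e^{i\overline{\lambda}B^*}e^{-i\lambda B}$ and Liouville's theorem) has no known unbounded analogue here. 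In short: you have reproved, in outline, the special case the paper already covers and correctly located why the general case is open; you have not resolved the conjecture, and neither does the paper.
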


\begin{rema}
What makes the previous conjecture interesting is that it is known
to hold if $A\in B(H)$ (Fuglede-Putnam Theorem), and as it is posed,
it is covered by none of the known (unbounded) generalizations of
Fuglede-Putnam Theorem (see e.g.
\cite{Mortad-Fuglede-Putnam-CAOT-2012},
\cite{Paliagiannis-2015-Fuglede-Newest} and \cite{STO}).
\end{rema}

\section{Acknowledgments}
The authors would like to thank Professor Jan Stochel for
Proposition \ref{Stochel Proposition} as well as Professor Bent
Fuglede for Theorem \ref{Fuglede NEW Theorem 2017}. Both results
were communicated to the corresponding author via email.

\end{document}